\newcommand{\cp}{\mathbin{\, \Box \,}}
\newcommand{\cF}{\mathcal{F}}
\tikzstyle{vertex}=[circle, draw, inner sep=0pt, minimum size=6pt]
\newtheorem{theorem}{Theorem}
\newtheorem{lemma}{Lemma}
\newtheorem{corollary}{Corollary}
\newtheorem{obs}{Observation}
\newtheorem{prop}{Proposition}
\begin{document}

\title{A characterization of well-dominated\\ Cartesian products}

\author{
$^{a}$Kirsti Kuenzel
\and
$^{b}$Douglas F. Rall
}


\maketitle

\begin{center}
$^a$ Department of Mathematics, Trinity College, Hartford, CT\\
$^b$ Department of Mathematics, Furman University, Greenville, SC\\

\end{center}

\begin{abstract}
A graph is well-dominated if all its minimal dominating sets have the same cardinality.  In this paper we prove that at least one factor of every connected, well-dominated
Cartesian product is a complete graph, which then allows us to give a complete characterization of the connected, well-dominated Cartesian products if both factors
have order at least $2$. In particular, we show that $G\,\Box\,H$ is well-dominated if and only if $G\,\Box\,H = P_3 \,\Box\,K_3$  or $G\,\Box\,H= K_n \,\Box\,K_n$ for some $n\ge 2$.
\end{abstract}

\noindent
{\bf Keywords:} well-dominated, Cartesian product \\

\noindent
{\bf AMS subject classification (2010)}: 05C69, 05C75, 05C76

\section{Introduction}
A minimal dominating set in any finite graph can be found in linear time by processing its vertices sequentially and retaining only those vertices that are needed
to dominate the graph.   The invariant of interest in most applications is the domination number of the graph, and the related decision problem is a well-known NP-complete problem~\cite{gj-79}.  The class of \emph{well-dominated} graphs, introduced by Finbow, Hartnell, and Nowakowski~\cite{fhn-1988}, are those for which every minimal
dominating set has the same cardinality.  Thus, a graph is well-dominated if and only if the simple algorithm described above always produces a dominating set of
minimum cardinality.  The structure of well-dominated graphs is far from being known, but the collection of connected, well-dominated graphs within several classes of graphs have
recently been determined. This list includes bipartite~\cite{fhn-1988}, girth larger than $4$~\cite{fhn-1988}, simplicial and chordal~\cite{ptv-1996}, and graphs
without cycles of length $4$ or $5$~\cite{lt-2017}.  In addition, a characterization has been given of the well-dominated graphs within the classes of direct products~\cite{r-2023}, lexicographic products~\cite{ghm-2017}, and disjunctive products~\cite{akr-2021}.  Several subclasses of well-dominated strong products and Cartesian products have also been exhibited.

In the initial attempt to determine the well-dominated Cartesian products, Anderson, Kuenzel and Rall~\cite{akr-2021} characterized the nontrivial, well-dominated
Cartesian products when both factors are triangle-free.
\begin{theorem}~{\rm (\cite[Theorem 2]{akr-2021})} \label{thm:maincp}
Let $G$ and $H$ be nontrivial, connected graphs both of which have girth at least $4$. The Cartesian product  $G\,\Box\, H$ is well-dominated  if and only if $G = H = K_2$.
\end{theorem}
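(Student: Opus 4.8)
The plan is to prove the two implications in turn. For ``$\Leftarrow$'' there is almost nothing to do: $K_2\cp K_2\cong C_4$, a single vertex of $C_4$ fails to dominate the opposite vertex, no $3$-element set is a minimal dominating set, and any two vertices of $C_4$ form a minimal dominating set, so every minimal dominating set of $C_4$ has exactly two vertices and $C_4$ is well-dominated. For ``$\Rightarrow$'', suppose $G\cp H$ is well-dominated with $G,H$ nontrivial, connected and of girth at least $4$. ``Girth at least $4$'' means triangle-free, and a Cartesian product of triangle-free graphs is triangle-free, so $G\cp H$ is triangle-free too; since the only nontrivial connected triangle-free graph of order $2$ is $K_2$ and $G\cp H\cong H\cp G$, the theorem reduces to proving $(\ast)$: if $G,H$ are nontrivial, connected and triangle-free and $|V(G)|\ge 3$, then $G\cp H$ is not well-dominated. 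As ``well-dominated'' means $\gamma=\Gamma$, where $\Gamma$ is the largest order of a minimal dominating set, and since $\gamma\le\beta\le\Gamma$ and every maximal independent set is a minimal dominating set, to prove $(\ast)$ it is enough to exhibit a dominating set of $G\cp H$ smaller than some maximal independent set, i.e.\ to show $\gamma(G\cp H)<\beta(G\cp H)$. Two facts will be used throughout: the trivial bound $\gamma(G\cp H)\le\gamma(G)\,|V(H)|$ (trim $D\times V(H)$ for a $\gamma$-set $D$ of $G$); and the hereditary property that \emph{if $F$ is well-dominated and $v\in V(F)$ then $F-N_F[v]$ is well-dominated with $\gamma(F-N_F[v])=\gamma(F)-1$}, since $D'\mapsto D'\cup\{v\}$ carries minimal dominating sets of $F-N_F[v]$ to minimal dominating sets of $F$; iterating over the closed neighbourhoods of an independent set of vertices of $G\cp H$ then lets one peel the product down to a smaller well-dominated graph while keeping exact control of $\gamma$.

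I would prove $(\ast)$ in two cases. \emph{Case $1$: $G$ and $H$ are both bipartite.} Then $G\cp H$ is a connected bipartite well-dominated graph, so by the characterization of bipartite well-dominated graphs~\cite{fhn-1988} it has a perfect matching $M$ such that the union of the neighbourhoods of the two ends of any $M$-edge induces a complete bipartite graph; in particular, for each $ab\in M$ every neighbour of $a$ is adjacent to every neighbour of $b$. Each $M$-edge changes exactly one coordinate. If an $M$-edge changes the $H$-coordinate and has $G$-coordinate $g$, then for any two neighbours $g_1,g_2$ of $g$ the vertices $(g_1,h)$ and $(g_2,h')$ of the two matched neighbourhoods are forced to be adjacent, whence $g_1=g_2$, so $g$ is a leaf of $G$; symmetrically, an $M$-edge changing the $G$-coordinate forces its $H$-coordinate to be a leaf of $H$. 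As $M$ is perfect, for every $(g,h)$ either $g$ is a leaf of $G$ or $h$ is a leaf of $H$; a connected graph of order at least $3$ has a non-leaf, so $|V(G)|\ge 3$ forces every vertex of $H$ to be a leaf, i.e.\ $H=K_2$. Feeding $H=K_2$ back into the neighbourhood condition (for an $M$-edge $(a,0)(b,0)$ with $ab\in E(G)$ the vertex $(a,1)$ must be adjacent to all of $N((b,0))$, forcing $N_G(b)=\{a\}$, while $M$-edges changing the $K_2$-coordinate again force their $G$-coordinate to be a leaf of $G$) shows every vertex of $G$ is a leaf, so $G=K_2$, contradicting $|V(G)|\ge 3$.

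\emph{Case $2$: at least one of $G,H$ is not bipartite.} Since $G\cp H\cong H\cp G$ and a triangle-free non-bipartite graph has at least $5$ vertices, I may rename so that $G$ is not bipartite (and still $|V(G)|\ge 3$); being triangle-free, $G$ contains an induced odd cycle $C=v_0v_1\cdots v_{2k}v_0$ with $2k+1\ge 5$. Here there is no structure theorem to invoke and I would argue by an explicit construction, modelled on the prism $C_{2k+1}\cp K_2$: there $\gamma(C_{2k+1}\cp K_2)=k+1$, while deleting one $K_2$-fibre and, in the remaining path of $2k$ fibres, choosing the two colour classes of a proper $2$-colouring alternately gives an independent --- hence extendable to a maximal independent --- set of size $2k>k+1$, the one sacrificed fibre being exactly the obstruction contributed by the odd cycle. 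The plan is then to (i) use the hereditary property above to peel $G\cp H$ down to a well-dominated graph in which the layers over $V(C)$ survive essentially intact, and (ii) splice the $C_{2k+1}\cp K_2$ pattern with a maximum independent set of $H$ (and, more crudely, with the fractional bound $\beta(G\cp H)\ge|V(G)|\,|V(H)|/\max\{\chi_f(G),\chi_f(H)\}$) to build a maximal independent set of $G\cp H$ whose cardinality exceeds $\gamma(G)\,|V(H)|\ge\gamma(G\cp H)$, contradicting well-domination.

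The main obstacle is Case $2$. In Case $1$ everything is reduced to a known characterization, but in Case $2$ the crude bounds $\gamma(G\cp H)\le\gamma(G)\,|V(H)|$ and $\beta(G\cp H)\ge\beta(G)\,\beta(H)$ are individually too weak --- they do not even separate $\gamma$ from $\beta$ already for $C_5\cp C_5$ --- so the two minimal dominating sets of different cardinality must be built essentially by hand, with the low-degree configurations of $G$ and $H$ treated separately, and carefully enough that the construction survives the neighbourhood-peeling and genuinely exploits the single structural fact on which the girth hypothesis rests: an odd cycle has no proper $2$-colouring.
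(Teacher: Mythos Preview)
The paper does not actually prove this theorem; it is quoted from \cite{akr-2021} as a prior result.  Nonetheless the paper supplies, in Lemma~\ref{lem:wcgirth4} (taken from \cite{hrw-2018}), exactly the tool that collapses your whole argument to a few lines: if $G$ and $H$ are connected, of order at least $3$, and of girth at least $4$, then $G\cp H$ is not even well-covered, hence not well-dominated.  This single lemma dispatches your entire Case~2 and almost all of Case~1; what remains is only the situation where one factor is $K_2$, and there a short direct computation (or Theorem~\ref{thm:wdCart}) finishes.

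Your proposal, as written, is not a proof.  Case~1 is essentially fine, although the perfect-matching condition you invoke is Ravindra's characterization of well-\emph{covered} bipartite graphs, not the well-dominated characterization in \cite{fhn-1988}; since well-dominated implies well-covered this is harmless, but the attribution is off.  The real gap is Case~2.  You outline a plan (peel off closed neighbourhoods using the hereditary property, transplant the $C_{2k+1}\cp K_2$ pattern, compare with $\gamma(G)\,|V(H)|$, appeal to a fractional-chromatic bound on $\beta$), and then you yourself flag that the crude bounds already fail for $C_5\cp C_5$ and that ``the two minimal dominating sets of different cardinality must be built essentially by hand, with the low-degree configurations of $G$ and $H$ treated separately.''  None of that hand-building is done.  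The peeling step also does not obviously preserve the Cartesian-product structure you need (removing $N[(g,h)]$ from $G\cp H$ is not of the form $G'\cp H'$), and the fractional bound $\beta(G\cp H)\ge |V(G)|\,|V(H)|/\max\{\chi_f(G),\chi_f(H)\}$ is not strong enough to beat $\gamma(G)\,|V(H)|$ in general.  In short, Case~2 names the right target ($\gamma<\beta$) but supplies no mechanism; the missing ingredient is precisely what Lemma~\ref{lem:wcgirth4} provides.
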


\noindent The same paper by Anderson, et al. included the following result.
\begin{theorem}~{\rm (\cite[Theorem 1]{akr-2021})} \label{thm:wdCartesian}
Let $G$ and $H$ be connected graphs. If the Cartesian product $G \cp H$ is well-dominated, then $G$ or $H$ is well-dominated.
\end{theorem}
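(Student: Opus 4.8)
The plan is to prove the contrapositive: if neither $G$ nor $H$ is well-dominated, then $G\cp H$ is not well-dominated. Recall that a graph $F$ is well-dominated exactly when $\gamma(F)=\Gamma(F)$, where $\Gamma(F)$ is the maximum size of a minimal dominating set of $F$; since every maximal independent set is a minimal dominating set, this in fact forces $\gamma(F)=i(F)=\alpha(F)=\Gamma(F)$, with $\alpha$ the independence number. So under our hypotheses $\gamma(G)<\Gamma(G)$ and $\gamma(H)<\Gamma(H)$, both factors have order at least $2$, and it suffices to exhibit two minimal dominating sets of $G\cp H$ of different cardinalities.

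A crude first move is to compare $\Gamma(G\cp H)\ge\alpha(G\cp H)\ge\alpha(G)\,\alpha(H)$ (a product of independent sets is independent) with $\gamma(G\cp H)\le\gamma(G)\,|V(H)|$ (the set $A\times V(H)$ dominates for a minimum dominating set $A$ of $G$); were $G\cp H$ well-dominated these would be equal, so we would win whenever $\alpha(G)\alpha(H)$ exceeds $\gamma(G)|V(H)|$ or the symmetric quantity. This need not happen, so in general the two sets must be built explicitly. The basic building block is: if $X$ is a minimal dominating set of $G$ in which every vertex $x$ has a private neighbor $x^{*}\ne x$, then $X\times V(H)$ is a minimal dominating set of $G\cp H$ of size $|X|\cdot|V(H)|$, because $N_{G\cp H}[(x^{*},h)]$ meets $X\times V(H)$ only in $(x,h)$. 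Call such an $X$ cleanly liftable. Thus, if $G$ or $H$ has two cleanly liftable minimal dominating sets of different sizes, we are done.

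Otherwise $G$ has a minimal dominating set $P$ containing a vertex whose only private neighbor is itself — for if not, every minimal dominating set of $G$ would be cleanly liftable, and since $\gamma(G)<\Gamma(G)$ two of distinct sizes would return us to the previous case. (Such a situation is real: the only cleanly liftable minimal dominating set of $P_3$ consists of its central vertex, and consistently $P_3\cp K_3$ is well-dominated.) Fix such a $P$; assign to each vertex of $V(G)\setminus P$ a neighbor in $P$, let $P'\subseteq P$ be the set of vertices chosen, and note the assignment can be arranged so that $P'\subsetneq P$. Then for any dominating set $Q$ of $H$,
\[
\Sigma(P,Q)\;=\;(P\times Q)\cup(P'\times V(H))
\]
dominates $G\cp H$ and has size $(|P|-|P'|)\,|Q|+|P'|\cdot|V(H)|$, which strictly increases with $|Q|$. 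As $H$ has minimal dominating sets of two different sizes, letting $Q$ be one of minimum and then one of maximum size yields two candidates $\Sigma(P,Q)$ of different cardinalities; the remaining task is to choose $P$, the neighbor assignment, and the two sets $Q$ so that both candidates are \emph{minimal} dominating sets of $G\cp H$.

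I expect this minimality verification to be the main obstacle. Every vertex of a minimal dominating set of $G\cp H$ must keep a private neighbor shielded both from other vertices of its $G$-fiber and from vertices of neighboring $H$-fibers, and reconciling the private-neighbor demands along the $G$-direction with those along the $H$-direction is delicate. I anticipate that making $\Sigma(P,Q)$ minimal will require $Q$ to be suitably structured — say an independent dominating set, or one with no vertex that is its own sole private neighbor — so the argument will branch according to which minimal dominating sets of the two prescribed sizes $H$ actually supplies, with a mirror branching for $G$. Confirming that for \emph{every} pair of non-well-dominated factors one of the two constructions (a clean lift, or a suitable refinement of $\Sigma(P,Q)$) can be carried out is, I anticipate, where the bulk of the work lies.
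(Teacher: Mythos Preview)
Your proposal is a plan rather than a proof, and you correctly locate the gap yourself: ensuring that $\Sigma(P,Q)$ (or some refinement) is a \emph{minimal} dominating set of $G\cp H$ for two distinct values of $|Q|$. There is reason to believe this cannot always be arranged. In the construction as written, for a vertex $(p,h)\in (P\setminus P')\times Q$ to retain a private neighbor one is essentially forced to use a vertex of the form $(p,h')$ with $h'$ an external private neighbor of $h$ with respect to $Q$; in other words, one needs $Q$ to be open irredundant. But a non-well-dominated $H$ may have open irredundant minimal dominating sets of only one cardinality---your own example $H=P_3$ is such a graph---so the two candidates $\Sigma(P,Q_1)$ and $\Sigma(P,Q_2)$ cannot both be made minimal by this mechanism, and the branching you anticipate would have to produce a genuinely different construction in that case. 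It is worth noting that the original published proof of this very statement in \cite{akr-2021} attempted a direct comparison of minimal dominating sets and was subsequently found to contain a logical gap; the present paper exists precisely to repair it.

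The paper does not give a direct proof of Theorem~\ref{thm:wdCartesian} at all. Instead it proves the much stronger Theorem~\ref{thm:CartesianWD}: if $G\cp H$ is connected and well-dominated then one of $G$, $H$ is a \emph{complete} graph. Since complete graphs are trivially well-dominated, Theorem~\ref{thm:wdCartesian} follows immediately. The proof of Theorem~\ref{thm:CartesianWD} takes a minimum-order counterexample, passes via Observation~\ref{obs:wdreduction} to the smaller product $(G-I_G)\cp(H-I_H)$ for suitably chosen maximal independent sets, applies the known classification of Theorem~\ref{thm:wdCart} to each component of that product, and then carries out a lengthy structural case analysis (buttressed by Lemmas~\ref{lem:path3}--\ref{lem:spwd} and Proposition~\ref{prop:domfactors}) to reach a contradiction. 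Even if your direct approach could be completed, it would yield only the weaker conclusion; the paper's route through Theorem~\ref{thm:CartesianWD} both verifies Theorem~\ref{thm:wdCartesian} and simultaneously delivers the full characterization of connected well-dominated Cartesian products.
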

\noindent However, in a private communication, Erika King and Michael O’Grady~\cite{ko-2023} pointed out a logical gap in the proof of Theorem~\ref{thm:wdCartesian}.

Rall~\cite{r-2023} determined the connected, well-dominated Cartesian products when one of the factors is a complete graph of order at least $2$.
In particular, he proved the following result.

\begin{theorem}~{\rm (\cite[Theorem 3]{r-2023})}\label{thm:wdCart}
Let $m$ be a positive integer with $m\ge 2$ and let $H$ be a nontrivial, connected graph. The Cartesian product $K_m\, \Box\, H$ is well-dominated
if and only if either $m \ne 3$ and $H= K_m$ or $m=3$ and $H \in \{P_3, K_3\}$.
\end{theorem}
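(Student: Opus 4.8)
The plan is to use the elementary fact that a graph is well-dominated precisely when its smallest and largest minimal dominating sets have the same size, together with one key structural observation about Cartesian products with a complete factor: if $m\ge 2$, then in $K_m\,\Box\,H$ every $H$-layer $\{(i,u):u\in V(H)\}$ is a minimal dominating set. It dominates because $(j,u)$ is adjacent to $(i,u)$, and it is minimal because deleting $(i,u)$ leaves $(j,u)$ undominated for every $j\ne i$. The reverse implication of the theorem is then largely bookkeeping: $K_m\,\Box\,K_m$ is well-dominated because a vertex set dominates $K_m\,\Box\,K_m$ if and only if it meets every ``row'' ($H$-layer) or every ``column'' ($K_m$-fibre), so that $\gamma(K_m\,\Box\,K_m)=m$ and every minimal dominating set contains exactly one vertex per row, or per column, and hence has size $m$; and $K_3\,\Box\,P_3$ is a nine-vertex graph whose well-domination can be verified directly, since no two of its vertices dominate it (so $\gamma=3$) while a short analysis of which fibres a minimal dominating set can meet rules out minimal dominating sets of size $4$ or more. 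Thus the real content is the forward direction.

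So suppose $K_m\,\Box\,H$ is well-dominated. By the layer observation every minimal dominating set has size exactly $n:=|V(H)|$; in particular $\gamma(K_m\,\Box\,H)=n$ and there is no minimal dominating set of size greater than $n$. I would first show $\Delta(H)\le m-1$: if some $w\in V(H)$ has $\deg_H(w)\ge m$, pick $f\colon V(H)\setminus\{w\}\to[m]$ whose restriction to $N_H(w)$ is surjective; then $\{(f(v),v):v\ne w\}$ dominates $K_m\,\Box\,H$ --- each fibre over a vertex $v\ne w$ contains the element $(f(v),v)$, and the fibre over $w$ is dominated layer by layer --- which is a dominating set of size $n-1$, contradicting $\gamma(K_m\,\Box\,H)=n$. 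Next I would settle the complete case: if $H=K_n$ then $n\le m$ by the degree bound, and if $n<m$ then any $K_m$-fibre $C_w=\{(i,w):i\in[m]\}$ is a minimal dominating set of size $m\ne n$ --- it dominates since in $K_n$ every vertex $(j,u)$ with $u\ne w$ is adjacent to $(j,w)\in C_w$, and removing $(i,w)$ leaves $(i,u)$ undominated for each $u\ne w$ --- a contradiction; hence $H=K_n$ forces $H=K_m$.

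It remains to handle connected non-complete $H$ with $\Delta(H)\le m-1$, which already forces $n\ge 3$ and $m\ge 3$, and to conclude that $m=3$ and $H=P_3$. The strategy is to exhibit, for every such $H$ other than $P_3$ when $m=3$, a minimal dominating set of $K_m\,\Box\,H$ whose size is not $n$. These come in two flavours. Sometimes domination in $K_m\,\Box\,H$ can be achieved with fewer than $n$ vertices, and one is done at once; for example $\gamma(K_3\,\Box\,C_4)=3<4$. Otherwise one builds a \emph{large} minimal dominating set out of $K_m$-fibres and partial layers of $K_m\,\Box\,H$, arranged so that every chosen vertex keeps a private neighbour; two clean instances are that the fibre $C_c$ over a universal vertex $c$ of $H$ is a minimal dominating set of size $m$, which would force $m=n$, and that $C_{u_2}\cup C_{u_3}$, the union of the fibres over the two central vertices of $P_4$, is a minimal dominating set of $K_3\,\Box\,P_4$ of size $6\ne 4$. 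The main obstacle is organizing all of this into a complete finite case analysis --- exploiting induced copies of $P_3$ in $H$, private-neighbour arguments, and the bound $\Delta(H)\le m-1$ --- that shows at least one such minimal dominating set always has size other than $n$, and, crucially, that $P_3$ together with $m=3$ is the unique configuration in which every one of these auxiliary minimal dominating sets happens to have size exactly $n$.
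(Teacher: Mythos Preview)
This theorem is not proved in the paper; it is quoted from Rall~\cite{r-2023} and used as a black box in the proof of Theorem~\ref{thm:CartesianWD}. There is therefore no proof here to compare your proposal against.

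On its own merits, your proposal is a reasonable outline but not a complete argument. The reverse implication is essentially fine: your claim that every minimal dominating set of $K_m\cp K_m$ has size $m$ goes through once one notes that if such a set meets every row and some row twice, deleting one of the two copies still leaves every row covered, contradicting minimality; and $K_3\cp P_3$ is small enough to verify directly. Your opening moves in the forward direction are also correct and constitute real progress: each $H$-layer is indeed a minimal dominating set of size $n(H)$, so well-domination forces every minimal dominating set to have that size; the surjection trick correctly yields $\Delta(H)\le m-1$; and the $K_m$-fibre argument shows that a complete $H$ must equal $K_m$.

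The gap is precisely the part you flag yourself. For connected non-complete $H$ with $\Delta(H)\le m-1$ you must produce, uniformly, a minimal dominating set of size different from $n(H)$ unless $m=3$ and $H=P_3$. Exhibiting a handful of examples ($K_3\cp C_4$, $K_3\cp P_4$, the fibre over a universal vertex) is not a substitute for the analysis, and your sentence ``the main obstacle is organizing all of this into a complete finite case analysis'' is an explicit acknowledgement that the argument has not been carried out. That case analysis is the substantive content of the theorem, and the proposal does not supply it.
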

\noindent In addition, he conjectured that every nontrivial, connected and well-dominated Cartesian product has a complete graph as a factor.
Our main result in this paper is the following theorem that proves this conjecture and thus verifies Theorem~\ref{thm:wdCartesian}.

\begin{theorem} \label{thm:CartesianWD}
If $G\,\Box\, H$ is connected and well-dominated, then  $G$ or $H$ is a complete graph.
\end{theorem}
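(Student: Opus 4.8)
The plan is to argue by contradiction. Since $K_1$ is complete and $G\,\Box\,H$ is connected exactly when both factors are, I may assume that $G$ and $H$ are connected with $|V(G)|,|V(H)|\ge 2$, and that neither $G$ nor $H$ is complete; the goal is then to produce two minimal dominating sets of $G\,\Box\,H$ of different sizes. Because a connected, non-complete graph has two vertices at distance $2$, I fix induced paths $a$--$b$--$c$ in $G$ (so $ac\notin E(G)$) and $x$--$y$--$z$ in $H$ (so $xz\notin E(H)$); then $\{a,b,c\}\times\{x,y,z\}$ induces a copy of $P_3\,\Box\,P_3$ in $G\,\Box\,H$. One harmless reduction: by Theorem~\ref{thm:maincp}, if both factors had girth at least $4$, then, since $G\neq K_2$, the product would fail to be well-dominated; hence I may also assume that $G$ contains a triangle, which will help in building an economical ``filler'' below.

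The mechanism behind the proof is that $P_3\,\Box\,P_3$ is already not well-dominated: its middle column is a dominating set of size $3$, whereas the four vertices $(a,y),(c,y),(b,x),(b,z)$ form an independent dominating set of the $3\times 3$ grid of size $4$, each being its own private neighbour (and the ``diagonal'' $\{(a,y),(b,x),(c,z)\}$ is an independent dominating set of size $3$). I would transplant this discrepancy: choose a set $F$ dominating the portion of $G\,\Box\,H$ not already dominated by the four vertices $(a,y),(c,y),(b,x),(b,z)$, with $F$ kept ``away'' from the patch $\{a,b,c\}\times\{x,y,z\}$ (this is where having a triangle in $G$ is convenient), and consider $S_{\mathrm{big}}=\{(a,y),(c,y),(b,x),(b,z)\}\cup F$ together with a companion set $S_{\mathrm{small}}$ obtained in the same way but with the four patch-vertices replaced by a $3$-vertex dominating configuration on the patch. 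If $F$ can be chosen so that no vertex of either set is superfluous, then $S_{\mathrm{small}}$ and $S_{\mathrm{big}}$ are minimal dominating sets of $G\,\Box\,H$ with $|S_{\mathrm{small}}|\neq|S_{\mathrm{big}}|$, contradicting well-domination.

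Turning this outline into a proof is the real work, and its delicate point is the main obstacle. Two things must be arranged. First, $F$ together with the patch configuration must dominate \emph{all} of $G\,\Box\,H$: the troublesome vertices are those whose $G$-coordinate lies outside $\{a,b,c\}$ or whose $H$-coordinate lies outside $\{x,y,z\}$, and since the two patch configurations cover their ``boundaries'' quite differently, $F$ must absorb both. Second, minimality has to be checked vertex by vertex; the four patch-vertices are self-private only inside the $3\times 3$ grid, so $F$ must stay out of their neighbourhoods, and conversely every vertex of $F$ must retain a private neighbour not claimed by the patch. Guaranteeing all this uniformly appears to force a case analysis according to how $\{a,b,c\}$ is embedded in $G$ (e.g.\ whether $a$ and $c$ have a common neighbour other than $b$, and the shape of $N_G(a),N_G(b),N_G(c)$) and, symmetrically, how $\{x,y,z\}$ is embedded in $H$. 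Once both sets are verified to be minimal dominating sets of distinct cardinalities, the theorem follows; incidentally, choosing the smaller patch configuration and the filler to be independent turns the same construction into a proof that $G\,\Box\,H$ is not even well-covered, which would imply the theorem as well, but the dominating-set version is what is needed here.
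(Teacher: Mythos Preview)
What you have written is not a proof but a plan, and you say so yourself: ``Turning this outline into a proof is the real work, and its delicate point is the main obstacle.'' The entire content of the argument is the construction of the filler $F$ and the verification that both $S_{\mathrm{small}}$ and $S_{\mathrm{big}}$ are minimal dominating sets, and you do neither. You only assert that a suitable $F$ should exist after some case analysis on how the two induced $P_3$'s sit inside $G$ and $H$; that case analysis is the proof, and it is absent.

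There is also a concrete obstruction you have not addressed. The two patch configurations dominate different boundary vertices: for instance, if $a'\in N_G(a)\setminus N_G[b]$, then $(a',y)$ is dominated by $(a,y)$ in the big configuration but by nothing in the middle-column configuration, so $F$ must cover $(a',y)$. Any vertex of $F$ dominating $(a',y)$ lies in $\{a'\}\times N_H[y]$ or in $N_G[a']\times\{y\}$; in either case it can easily be adjacent to $(a,y)$ or $(c,y)$, destroying their status as self-private neighbours in $S_{\mathrm{big}}$. Ruling this out in general is exactly the difficulty, and your appeal to a triangle in $G$ does not visibly help, since the troublesome boundary vertices live in $H$-layers as well. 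Without an explicit construction (or at least a lemma guaranteeing one), the argument does not go through.

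For comparison, the paper does not try to transplant a local defect. It takes a minimal counterexample, removes the closed neighbourhood of $I_G\times I_H$ for maximal independent sets $I_G,I_H$ to obtain the well-dominated product $(G-I_G)\,\Box\,(H-I_H)$, and then uses the minimality together with Theorem~\ref{thm:wdCart} to force every component of $G-I_G$ and $H-I_H$ to be $K_1$, $K_2$, $K_3$, $P_3$, or $K_m$; a lengthy but finite case analysis on these component types (supported by Lemmas~\ref{lem:path3}--\ref{lem:spwd}) then reconstructs $G$ or $H$ as a complete graph. That machinery is precisely what replaces the filler you never build.
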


\noindent Applied together with Theorem~\ref{thm:wdCart}, this  completes the characterization of the class of nontrivial, connected, well-dominated Cartesian products.

In Section~\ref{sec:Defns} we supply necessary definitions and provide additional background results in the study of well-dominated Cartesian products.  In addition,
we prove several preliminary lemmas that will be used in our proof of the main theorem.  Section~\ref{sec:completefactor} is devoted to proving Theorem~\ref{thm:CartesianWD}.

\section{Definitions and Preliminary Results} \label{sec:Defns}

In this paper we restrict our attention to finite, simple graphs.  We say a graph $G$ is \emph{nontrivial} if its order, denoted by $n(G)$, is at least $2$, and a Cartesian product
is said to be \emph{nontrivial} if both factors are nontrivial.
In general we follow the definitions and notation of the book by Haynes, Hedetniemi and Henning~\cite{hhh-2023}.  For a positive integer $n$, the set
$\{1,\ldots,n\}$ will be denoted by $[n]$.  If $g$ is a vertex in a graph $G$, the set of vertices adjacent to $g$ is denoted by $N_G(g)$ and is called the \emph{open neighborhood}
of $g$.  The \emph{closed neighborhood} of $g$ is the set $N_G[g]$ defined by $N_G[g]=N_G(g) \cup \{g\}$.  For a set $S \subseteq V(G)$, the open neighborhood of $S$
is defined by $N_G(S)=\cup_{g\in S}N_G(g)$ and its closed neighborhood is the set $N_G[S]=N_G(S)\cup S$. We will omit the subscript on these neighborhood set
names if the graph is clear from the context.  A \emph{dominating set} of $G$ is a subset $S$ of the vertex set of
$G$ such that $N[S]=V(G)$.  A dominating set $S$ is minimal if $S-\{x\}$ does not dominate $G$ for every $x \in S$.  This condition is equivalent to requiring
$N[x] - N[S-\{x\}] \neq \emptyset$ for every $x\in S$.  When $N[u] - N[A-\{u\}] \neq \emptyset$ for a vertex $u \in A$, we say that $u$ has a \emph{private
neighbor with respect to $A$}.  When the set $A$ is clear from the context we may simply say that $u$ has a private neighbor.
If this is the case for each $u$ in $A$, then we say that $A$ is \emph{irredundant}.  If $A$ is irredundant, then some of the private neighbors
of a vertex $u \in A$ might belong to $A$ (when $N(u) \cap A=\emptyset$) and others might belong to $V(G)-A$.  If $A$ is an irredundant set and every vertex $u \in A$
has a private neighbor with respect to $A$ that does not belong to $A$, then we say $A$ is \emph{open irredundant}.  A set $I \subseteq V(G)$ is \emph{independent} if
no pair of distinct vertices in $I$ are adjacent.

The \emph{domination number} of $G$ is the cardinality of a smallest dominating set of $G$ and is denoted $\gamma(G)$.  A dominating set $D$ of $G$ with $|D|=\gamma(G)$ is
called a $\gamma(G)$-set.  The \emph{vertex independence number} of $G$ is the cardinality of a largest independent set in $G$ and is denoted $\alpha(G)$.  A set $I \subseteq V(G)$ is called a $\alpha(G)$-set if $I$ is independent and has cardinality $\alpha(G)$.  The graph $G$ is said to be \emph{well-dominated}
if all of its minimal dominating sets have cardinality $\gamma(G)$, and $G$ is \emph{well-covered} if all of its maximal independent sets have cardinality $\alpha(G)$.
One can greedily add vertices to any independent set in $G$ to enlarge it to a maximal independent set.  It follows that in a well-covered graph
$G$ any independent set is a subset of an $\alpha(G)$-set.  It is clear from the definition that a maximal independent set in $G$ is a minimal dominating set of $G$.
Consequently, a graph that is well-dominated is also well-covered.

If $A$ is a nonempty set, then a collection of pairwise disjoint subsets of $A$ whose union is $A$ is called a \emph{weak partition} of $A$.  Let $X$ and $Y$ be graphs.  The
\emph{Cartesian product}, $X\,\Box \,Y$, is the graph whose vertex set is $V(X) \times V(Y)$.  Two vertices $(x_1,y_1)$ and $(x_2,y_2)$ are adjacent in $X\,\Box \,Y$
if they are equal in one coordinate and adjacent in the other coordinate.  The Cartesian product is associative, commutative and distributes over disjoint
unions.  For a fixed vertex $x\in V(X)$, the set $\{(x,y) \,|\, y \in V(Y)\}$ is called a \emph{$Y$-layer}.
Each $Y$-layer induces a subgraph of $X\,\Box \,Y$ that is isomorphic to $Y$.  Similarly, for a fixed vertex $y$ in $V(Y)$, the set
$\{(x,y) \,|\, x \in V(X)\}$ is called a \emph{$X$-layer}, and it induces a subgraph isomorphic to $X$.

We now list some preliminary results that will prove useful in the remainder of the paper.  Note that if $I$ is an independent set in a
graph $X$, then $I \cup D$ is a minimal dominating set of $X$ for every minimal dominating set $D$ of $X-N[I]$.  This implies the following observation
as noted by Finbow, Nowakowski and Hartnell~\cite{fhn-1988}.
\begin{obs}~{\rm (\cite{fhn-1988})} \label{obs:prelimfacts}
If $X$ is a well-dominated graph and $I$ is an independent set of $X$, then $X-N[I]$ is well-dominated.
\end{obs}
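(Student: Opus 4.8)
The statement rests on the remark recorded just above it, so the plan is to prove that remark carefully and then read off the Observation. Thus I would first verify that if $I$ is an independent set of $X$ and $D$ is a minimal dominating set of $X-N[I]$, then $I\cup D$ is a minimal dominating set of $X$. That $I\cup D$ dominates $X$ is immediate: the vertices of $N[I]$ are dominated by $I$, and since $X-N[I]$ is an induced subgraph of $X$, every vertex of $V(X)-N[I]$ that is dominated by $D$ inside $X-N[I]$ is still dominated by $D$ in $X$; hence $N_X[I\cup D]=V(X)$.

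The substantive point is minimality, i.e.\ that every vertex of $I\cup D$ has a private neighbor with respect to $I\cup D$. The key structural fact is that no vertex of $D$ is equal or adjacent to any vertex of $I$, because $D\subseteq V(X)-N[I]$. Consequently, for a vertex $x\in I$: $x$ is not dominated by $I-\{x\}$ (as $I$ is independent) and not dominated by $D$ (by the key fact), so $x$ is its own private neighbor with respect to $I\cup D$. For a vertex $y\in D$: let $z$ be a private neighbor of $y$ with respect to $D$ in the graph $X-N[I]$. Then $z\notin N[I]$, so $z$ is not dominated by $I$; and since $z$ together with every vertex of $D-\{y\}$ lies in $V(X-N[I])$, adjacency among them in $X$ coincides with adjacency in $X-N[I]$, so $z$ is not dominated by $D-\{y\}$ in $X$ either. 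Hence $z$ is a private neighbor of $y$ with respect to $I\cup D$. This proves the claim.

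Now the Observation follows quickly. Assume $X$ is well-dominated and $I$ is independent, and let $D_1$ and $D_2$ be any two minimal dominating sets of $X-N[I]$. By the claim, $I\cup D_1$ and $I\cup D_2$ are minimal dominating sets of $X$, so $|I\cup D_1|=\gamma(X)=|I\cup D_2|$ since $X$ is well-dominated. Because $I$ is disjoint from each $D_i$ (again since $D_i\subseteq V(X)-N[I]$), this yields $|I|+|D_1|=|I|+|D_2|$, hence $|D_1|=|D_2|$. Therefore all minimal dominating sets of $X-N[I]$ have the same cardinality, that is, $X-N[I]$ is well-dominated. The degenerate case $N[I]=V(X)$, where $X-N[I]$ is the null graph, is vacuously fine.

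There is no genuinely hard step here; the only place that requires care is the minimality argument of the second paragraph, specifically the need to check that a private neighbor of a vertex of $D$ computed inside $X-N[I]$ remains private once $I$ is adjoined. That is exactly what the observation ``$D$ avoids $N[I]$ entirely'' takes care of.
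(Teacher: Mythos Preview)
Your proof is correct and follows exactly the approach the paper indicates: you verify the remark that $I\cup D$ is a minimal dominating set of $X$ whenever $D$ is a minimal dominating set of $X-N[I]$, and then deduce the observation by comparing cardinalities. The paper merely states this remark without supplying the minimality argument, so your write-up simply fills in the details the authors left implicit.
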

It is clear that the Cartesian product of two connected graphs is itself connected.  Also, since the Cartesian product distributes over
disjoint unions, we get the following useful fact.
\begin{obs} \label{obs:connectedCP}
If $X \cp Y$ is connected, then $X$ and $Y$ are connected.
\end{obs}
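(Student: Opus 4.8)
The plan is to prove the contrapositive: if $X$ is disconnected then $X \cp Y$ is disconnected, and likewise if $Y$ is disconnected. Since the Cartesian product is commutative, the two cases are symmetric, so it suffices to treat $X$. Throughout I would assume, as is standard, that all graphs under discussion are nonempty, so that the word ``connected'' is unambiguous and the vertex sets appearing below are all nonempty.

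First I would record the component structure of a disconnected $X$. If $X$ is disconnected, we may partition its vertex set into two nonempty sets $A$ and $B$ with no edge of $X$ joining a vertex of $A$ to a vertex of $B$; writing $X_A$ and $X_B$ for the subgraphs induced by $A$ and $B$, this says exactly that $X = X_A \cup X_B$ as a disjoint union of graphs. Invoking the fact (stated in the excerpt) that the Cartesian product distributes over disjoint unions, we obtain $X \cp Y = (X_A \cp Y) \cup (X_B \cp Y)$.

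Next I would verify that this is a genuine disconnection of $X \cp Y$. Because $Y$ is nonempty, both $X_A \cp Y$ and $X_B \cp Y$ are nonempty, so neither side of the union is empty. It then remains only to check that $X \cp Y$ has no edge joining a vertex $(a,y)$ with $a \in A$ to a vertex $(b,y')$ with $b \in B$. Such an edge would require these two vertices to agree in one coordinate and be adjacent in the other; but they cannot agree in the first coordinate, since $A \cap B = \emptyset$, and they cannot be adjacent in the first coordinate, since $X$ has no edge between $A$ and $B$. Hence $V(X \cp Y)$ splits into two nonempty parts with no edge between them, so $X \cp Y$ is disconnected. Applying the same reasoning to $Y$ in place of $X$, via commutativity, completes the contrapositive and hence the proof.

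The argument is entirely routine and I anticipate no real obstacle; the only points needing a moment of care are the degenerate conventions (handled by assuming nonempty graphs at the outset) and the precise invocation of distributivity over disjoint unions. As an alternative to the disjoint-union computation, one could argue directly by projection: given $x_1, x_2 \in V(X)$ and a fixed $y \in V(Y)$, a path in $X \cp Y$ from $(x_1,y)$ to $(x_2,y)$ projects onto its first coordinate to a walk from $x_1$ to $x_2$ in $X$, since each edge of the path either fixes the first coordinate or traverses an edge of $X$; this shows $X$ is connected directly, and symmetrically for $Y$.
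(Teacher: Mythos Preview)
Your proof is correct and follows the same approach as the paper, which simply remarks that the observation follows since the Cartesian product distributes over disjoint unions. You have merely supplied the (routine) details that the paper omits.
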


\noindent The following observation follows directly from the definitions.
\begin{obs} \label{obs:components}
A graph is well-dominated if and only if each of its components is well-dominated.
\end{obs}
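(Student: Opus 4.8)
The plan is to reduce everything to a single structural fact: domination decomposes across connected components. Write the components of the graph $G$ as $G_1,\ldots,G_k$ and set $V_i=V(G_i)$. The central claim I would establish first is that a set $D\subseteq V(G)$ is a minimal dominating set of $G$ if and only if $D\cap V_i$ is a minimal dominating set of $G_i$ for every $i\in[k]$. Both implications of the observation follow quickly once this claim is in hand, since the sets $\{D\cap V_i\}$ form a weak partition of $D$ and cardinality is additive over a weak partition.

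To prove the claim I would use two local facts. First, since no edge of $G$ joins vertices in distinct components, for any vertex $x\in V_i$ we have $N_G[x]=N_{G_i}[x]\subseteq V_i$; hence $D$ dominates $G$ precisely when each $D\cap V_i$ dominates $G_i$. Second, for a vertex $x\in D\cap V_i$, the condition for $x$ to have a private neighbor only involves $N_G[x]$ and $N_G[D-\{x\}]$, and every vertex of $N_G[x]$ lies in $V_i$; therefore $x$ has a private neighbor with respect to $D$ in $G$ if and only if it has a private neighbor with respect to $D\cap V_i$ in $G_i$. Combining these, $D$ is a minimal dominating set of $G$ exactly when each restriction $D\cap V_i$ is a minimal dominating set of $G_i$, which is the claim.

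With the claim established, the reverse direction is immediate: if each $G_i$ is well-dominated, then every minimal dominating set $D$ of $G$ satisfies $|D|=\sum_{i=1}^{k}|D\cap V_i|=\sum_{i=1}^{k}\gamma(G_i)$, a quantity independent of the choice of $D$, so all minimal dominating sets of $G$ have the same cardinality. For the forward direction, I would show each component $G_i$ is well-dominated by taking two arbitrary minimal dominating sets $A,B$ of $G_i$ and, for each $j\neq i$, fixing a single minimal dominating set $C_j$ of $G_j$. The claim guarantees that $A\cup\bigcup_{j\neq i}C_j$ and $B\cup\bigcup_{j\neq i}C_j$ are both minimal dominating sets of $G$, and since $G$ is well-dominated they have equal cardinality; because they agree on every component other than $G_i$, this forces $|A|=|B|$.

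There is no serious obstacle here; the only point requiring care is the verification that minimality localizes to each component, i.e., that a private neighbor of a vertex $x\in V_i$ can never leave $V_i$. Once that is checked, the additivity of cardinality over the weak partition $\{D\cap V_i\}$ does all the remaining work.
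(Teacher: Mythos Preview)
Your argument is correct. The paper does not actually supply a proof for this observation; it simply states that it ``follows directly from the definitions,'' and your componentwise decomposition of minimal dominating sets is precisely the routine verification one would write to justify that remark.
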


Although the well-covered Cartesian products have not been characterized, Hartnell and Rall reduced the characterization in the following result.

\begin{theorem}~{\rm (\cite{hr-2013})}\label{thm:wc-cartesian}
If $G$ and $H$ are graphs such that $G\cp H$ is well-covered, then at least one of $G$ or $H$ is well-covered.
\end{theorem}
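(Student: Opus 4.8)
The plan is to prove the contrapositive: if neither $G$ nor $H$ is well-covered, then $G\cp H$ is not well-covered. Recall the standard fact that a graph $X$ is well-covered precisely when $i(X)=\alpha(X)$, where $i(X)$ denotes the minimum cardinality of a maximal independent set of $X$ (maximal independent sets range in size between $i(X)$ and $\alpha(X)$). Thus the hypothesis becomes $i(G)<\alpha(G)$ and $i(H)<\alpha(H)$, and the goal is to exhibit two maximal independent sets of $G\cp H$ of different cardinalities. Since $K_1$ is well-covered and $K_1\cp X\cong X$, the assumption that neither factor is well-covered forces both factors to be nontrivial, so we may assume this throughout.

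The engine of the argument is the following construction lemma. Let $I$ be a maximal independent set of $G$ and $J$ a maximal independent set of $H$, and write $\bar I=V(G)\setminus I$ and $\bar J=V(H)\setminus J$. First I would check that $I\times J$ is independent in $G\cp H$ and determine exactly which vertices it fails to dominate: a vertex $(g,h)$ is undominated by $I\times J$ if and only if $g\in\bar I$ and $h\in\bar J$. The key observations are that the undominated set $\bar I\times\bar J$ induces the subgraph $G[\bar I]\cp H[\bar J]$ and, crucially, that every vertex of $\bar I\times\bar J$ is nonadjacent to all of $I\times J$. Consequently, if $M$ is any maximal independent set of $G[\bar I]\cp H[\bar J]$, then $(I\times J)\cup M$ is independent and dominating, hence a maximal independent set of $G\cp H$, of cardinality $|I|\,|J|+|M|$.

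Now suppose for contradiction that $G\cp H$ is well-covered, so every maximal independent set has cardinality $\alpha:=\alpha(G\cp H)$. Fixing maximal independent sets $I$ of $G$ and $J$ of $H$ and letting $M$ range over all maximal independent sets of $G[\bar I]\cp H[\bar J]$, the lemma shows that every such $M$ has the same cardinality $\alpha-|I|\,|J|$; that is, each residual product $G[\bar I]\cp H[\bar J]$ is itself well-covered, with common maximal-independent-set size $\alpha-|I|\,|J|$. Specializing, I would fix a maximum independent set $D$ of $H$ (so $|D|=\alpha(H)$) and apply this with $I$ a smallest maximal independent set $A$ of $G$ (so $|A|=i(G)$) and then with $I$ a largest one $B$ (so $|B|=\alpha(G)$). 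Writing $m(I,D)$ for the common maximal-independent-set size of $G[\bar I]\cp H[\bar D]$, subtracting the two identities $|A||D|+m(A,D)=\alpha=|B||D|+m(B,D)$ yields
\[
 m(A,D)-m(B,D)=\bigl(\alpha(G)-i(G)\bigr)\,\alpha(H)>0 ,
\]
and a symmetric relation holds with the roles of $G$ and $H$ interchanged.

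The remaining task, and the step I expect to be the main obstacle, is to convert these identities into a contradiction. The residual sizes $m(I,J)$ are not free parameters: because $G[\bar I]\cp H[\bar J]$ is again a well-covered Cartesian product, the same lemma applies recursively, and I would bound $m(I,J)$ above and below using $\alpha(G\cp H)\ge\alpha(G)\,\alpha(H)$ together with the corresponding bounds for the residual factors. The delicate point is that $G[\bar I]$ and $H[\bar J]$ need \emph{not} inherit the failure of well-coveredness from $G$ and $H$, so one cannot merely recurse on the statement ``both factors not well-covered.'' I would resolve this by taking a counterexample minimizing $n(G)+n(H)$ and choosing $I$ and $J$ to be maximum independent sets, which makes the residual product as small as possible and lets me compare $\alpha-\alpha(G)\alpha(H)$ with the independence number of $G[\bar I]\cp H[\bar J]$; the extremal choice, combined with the strict inequalities $i(G)<\alpha(G)$ and $i(H)<\alpha(H)$, should force the residual to violate one of the size identities above and thereby deliver the contradiction.
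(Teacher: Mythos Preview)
This theorem is quoted in the paper without proof from Hartnell and Rall~\cite{hr-2013}, so there is no in-paper argument to compare against; I comment on your proposal directly.

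Your construction lemma is correct: for maximal independent sets $I$ of $G$ and $J$ of $H$, there are no edges of $G\cp H$ between $I\times J$ and $\bar I\times\bar J$, and $(I\times J)\cup M$ is a maximal independent set of $G\cp H$ whenever $M$ is a maximal independent set of $G[\bar I]\cp H[\bar J]$. Under the hypothesis that $G\cp H$ is well-covered this correctly yields that each residual product $G[\bar I]\cp H[\bar J]$ is well-covered with $\alpha\bigl(G[\bar I]\cp H[\bar J]\bigr)=\alpha(G\cp H)-|I|\,|J|$. (This is exactly the well-covered analogue of the paper's Observation~\ref{obs:wdreduction}.)

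However, the proposal is not a proof. Your own last paragraph concedes that the remaining step is ``the main obstacle'' and then offers only a plan---a minimal counterexample, recursive use of the lemma, the bound $\alpha(G\cp H)\ge\alpha(G)\,\alpha(H)$---which together ``should force'' a contradiction. None of this is carried out. The displayed identity $m(A,D)-m(B,D)=(\alpha(G)-i(G))\,\alpha(H)$ is not a contradiction; it merely records that two residual independence numbers differ, which is exactly what one expects when $|A|\ne|B|$. To finish you would need an independent constraint linking $m(A,D)$ and $m(B,D)$, and nothing in your sketch supplies one: even in a minimal counterexample, knowing only that each residual product has at least one well-covered factor does not pin down the values $m(I,J)$ in the way you need, and the inequality $\alpha(G\cp H)\ge\alpha(G)\,\alpha(H)$ is far too weak to create tension with that identity. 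The substance of the theorem lies precisely in this unexecuted step; consult~\cite{hr-2013} for the complete argument.
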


\noindent Since a graph is well-covered if it is well-dominated, we will use Theorem~\ref{thm:wc-cartesian} to infer that at least one of the factors of a well-dominated Cartesian product is well-covered.

The following lemma established by Hartnell, Rall and Wash will be used to show that the Cartesian product of two triangle-free connected graphs
is not well-dominated if both have order at least $3$.
\begin{lemma}~{\rm (\cite[Lemma 4]{hrw-2018})}\label{lem:wcgirth4}
If $G$ and $H$ are connected graphs both having order at least $3$ and girth at least $4$, then $G\, \Box\, H$ is not well-covered.
\end{lemma}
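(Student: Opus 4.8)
The plan is to show that $G\,\Box\,H$ is \emph{not} well-covered by exhibiting a maximal independent set whose cardinality is strictly smaller than $\alpha(G\,\Box\,H)$. This suffices, because in a well-covered graph every maximal independent set is a maximum independent set (any maximum independent set is certainly maximal). So the goal is to build, as economically as possible, an independent dominating set of $G\,\Box\,H$.

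First I would record the structure I intend to use. Each of $G$ and $H$ is connected and has at least three vertices, so each contains a vertex of degree at least $2$; and since each has girth at least $4$, each is triangle-free, so the open neighborhood of every vertex is independent and in particular each factor contains an induced path on three vertices. Fix such paths $a$--$b$--$c$ in $G$ and $x$--$y$--$z$ in $H$, so that $\{a,c\}\subseteq N_G(b)$ and $\{x,z\}\subseteq N_H(y)$. I would also record the lower bound $\alpha(G\,\Box\,H)\ge\alpha(G)\,\alpha(H)+1$: if $I_G$ and $I_H$ are maximum independent sets of $G$ and $H$, then $I_G\times I_H$ is independent of size $\alpha(G)\,\alpha(H)$, and since $G$ and $H$ have edges there are vertices $g\notin I_G$ and $h\notin I_H$; the vertex $(g,h)$ has no neighbor in $I_G\times I_H$, so $(I_G\times I_H)\cup\{(g,h)\}$ is independent.

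For the economical maximal independent set, the idea is that the layer over $b$ is dominated ``for free'' by the layers over its neighbors $a$ and $c$, since $(b,h)$ is adjacent to $(a,h)$ and to $(c,h)$. Concretely, I would extend $\{a,c\}$ to a maximal independent set $I_G$ of $G$ (so $b\notin I_G$), place over each vertex of $I_G$ a maximal independent set of $H$ --- choosing the sets assigned to $a$ and to $c$ so that their union covers as much of $V(H)$ as possible --- and then, over each $g\notin I_G$, adjoin a maximal independent set of the subgraph of $H$ induced by the vertices of the $g$-layer not yet dominated (every such $g$ has a neighbor in $I_G$ by maximality, and the layers over $I_G$ are already fully dominated, so any greedy completion adds vertices only over the layers outside $I_G$, subject to the obvious cross-layer disjointness constraints). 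The resulting set $M$ is a maximal independent set, and because the $b$-layer contributes only the vertices of $H$ missed by the sets over $a$ and $c$, one should be able to bound $|M|$ below $\alpha(G\,\Box\,H)$, comparing against the estimate above.

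The hard part will be this last bookkeeping: verifying that the completion can be carried out while keeping the set independent (the vertices outside $I_G$ need not be independent, so the sets adjoined over adjacent such layers must be kept disjoint) and, above all, proving the final size is strictly less than $\alpha(G\,\Box\,H)$. I expect this forces a case analysis according to the structure of the factors --- at a minimum a split into $H$ bipartite (take the sets over $a$ and $c$ to be the two parts of a bipartition, so the $b$-layer contributes nothing) versus $H$ non-bipartite, where the sets over $a$ and $c$ must be chosen more delicately to leave only a small remainder and one may also need the sharper bound obtained by adjoining two non-adjacent undominated vertices to $I_G\times I_H$. A guiding example is $P_3\,\Box\,C_5$: placing the complementary independent sets $\{v_3,v_5\}$ and $\{v_2,v_4\}$ of $C_5$ over the two ends of the path and the single vertex $v_1$ over its center gives a maximal independent set of size $5$, while $\alpha(P_3\,\Box\,C_5)=6$.
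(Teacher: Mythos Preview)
The paper does not prove this lemma; it is quoted verbatim from \cite{hrw-2018} and used as a black box. So there is no in-paper argument to compare your sketch against.

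On its own merits, your outline is a reasonable strategy---build a small maximal independent set and beat a lower bound on $\alpha(G\cp H)$---but the two ends do not yet meet, and you say as much. Two concrete issues. First, your only stated lower bound is $\alpha(G\cp H)\ge \alpha(G)\alpha(H)+1$, and your own guiding example already shows this is too weak: for $P_3\cp C_5$ one has $\alpha(G)\alpha(H)+1=2\cdot 2+1=5$, which equals the size of the maximal independent set you construct; you need the value $6$, which you assert but do not derive from a general inequality. So a sharper, uniformly applicable lower bound (not just ``adjoin two more vertices when possible'') is required before the comparison can be made. Second, and more seriously, you give no upper bound on $|M|$. Once you leave the three layers over $a,b,c$, the ``completion'' over each $g\notin I_G$ picks up a maximal independent set of an induced subgraph of $H$ whose size you do not control; summed over all such $g$ this can far exceed $\alpha(G)\alpha(H)$. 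The saving you identify is localized to the single layer over $b$, while the target inequality is global. The cross-layer disjointness constraint you flag (for adjacent $g,g'\notin I_G$) is exactly what makes the greedy completion hard to bound, and nothing in the sketch addresses it.

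In short: the plan is not wrong in spirit, but the ``hard part'' you defer is essentially the entire proof. If you want to pursue this line, it is cleaner to compare two specific maximal extensions rather than to bound $|M|$ against $\alpha$: fix the induced $P_3$'s $a\!-\!b\!-\!c$ in $G$ and $x\!-\!y\!-\!z$ in $H$, and compare a maximal extension of $\{(b,y)\}$ with a maximal extension of $\{(a,x),(a,z),(c,x),(c,z)\}$ (both independent by triangle-freeness), tracking what each forces in the $3\times 3$ block $\{a,b,c\}\times\{x,y,z\}$ and arguing the remainders outside agree. That localizes the bookkeeping and is closer to how the cited source handles it.
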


Bollob\'{a}s and Cockayne~\cite{bc-1979} showed that every connected graph has a minimum dominating set that is open irredundant.  Using this fact, Rall
proved the following proposition by showing that $D \times V(Y)$ is a minimal dominating set of $X \, \Box\, Y$ for any graph $Y$ if $D$ is
an open irredundant dominating set of $X$.

\begin{prop}~{\rm (\cite[Proposition 16]{r-2023})}\label{prop:domfactors}
Let $X$ and $Y$ be nontrivial connected graphs. If $X\,\Box\, Y$ is well-dominated, then $\gamma(X\,\Box\, Y) = \gamma(X)n(Y) = \gamma(Y)n(X)$.
\end{prop}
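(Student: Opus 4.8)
The plan is to exploit the hint that an open irredundant minimum dominating set of a factor ``lifts'' to a minimal dominating set of the product. First I would invoke the result of Bollob\'{a}s and Cockayne~\cite{bc-1979} to fix a minimum dominating set $D$ of $X$ that is open irredundant, so that $|D| = \gamma(X)$ and each $d \in D$ has an external private neighbor $w_d \in N_X(d) \setminus D$ satisfying $w_d \notin N_X[D - \{d\}]$. My candidate minimal dominating set of $X \cp Y$ is the set $D \times V(Y)$, whose cardinality is exactly $\gamma(X)\, n(Y)$.

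Next I would verify that $D \times V(Y)$ is a dominating set and, crucially, a minimal one. Domination is immediate: given $(x,y)$, some $d \in D$ lies in $N_X[x]$, whence $(d,y) \in D \times V(Y)$ dominates $(x,y)$ along its $Y$-layer. For minimality I must exhibit, for each $(d,y) \in D \times V(Y)$, a private neighbor with respect to $D \times V(Y)$. The natural choice is $(w_d, y)$, the external private neighbor of $d$ placed in the same $Y$-layer. Since $w_d \in N_X(d)$, this vertex lies in $N[(d,y)]$. I would then check that no other vertex $(d', y') \in D \times V(Y)$ dominates $(w_d, y)$: because $w_d \notin D$, the vertex $(w_d,y)$ equals no element of $D \times V(Y)$, so any domination would require an edge, and product adjacency forces either $w_d = d'$ (ruled out by $w_d \notin D$) or $y = y'$ together with $d' \in N_X(w_d) \cap (D - \{d\})$ (ruled out by $w_d \notin N_X[D - \{d\}]$). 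Hence $(w_d, y)$ is a private neighbor and $D \times V(Y)$ is a minimal dominating set.

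Since $X \cp Y$ is well-dominated, every minimal dominating set has cardinality $\gamma(X \cp Y)$; applying this to $D \times V(Y)$ yields $\gamma(X \cp Y) = \gamma(X)\, n(Y)$. Finally, because the Cartesian product is commutative and $X \cp Y$ is isomorphic to $Y \cp X$, the identical argument applied to an open irredundant minimum dominating set of $Y$ (forming $V(X) \times D_Y$) gives $\gamma(X \cp Y) = \gamma(Y)\, n(X)$, completing the required chain of equalities.

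The main obstacle, and the reason the proof needs the full strength of the Bollob\'{a}s--Cockayne result rather than mere irredundance, is precisely the private-neighbor verification in the product. An internal private neighbor of $d$, such as $d$ itself in the case $d \notin N_X[D - \{d\}]$, would fail to survive the lift: the vertex $(d,y)$ is always dominated within its own $Y$-layer by $(d, y')$ for any $y' \in N_Y(y)$, which exists because $Y$ is nontrivial and connected. Thus it is essential that each chosen private neighbor be external, living in $V(X) \setminus D$, so that it cannot be captured by a neighboring copy of $D$ in an adjacent layer. Pinning down this distinction, and confirming that externality is exactly what the product adjacency analysis requires, is where the real content lies; the remaining steps are routine bookkeeping.
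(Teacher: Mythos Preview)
Your argument is correct and matches the paper's approach exactly: the paper sketches the proof by citing the Bollob\'{a}s--Cockayne result to obtain an open irredundant minimum dominating set $D$ of $X$ and then asserting that $D \times V(Y)$ is a minimal dominating set of $X \cp Y$, which is precisely what you carry out in detail. Your explanation of why open irredundance (rather than mere irredundance) is needed is also on point and fills in the one step the paper leaves implicit.
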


The method we will use to prove  Theorem~\ref{thm:CartesianWD} involves assuming that $G\,\Box\,H$ is a minimal counterexample.  By removing the
closed neighborhood of a maximal independent set in $G\,\Box\,H$, we then will analyze the Cartesian product of smaller factors that may be disconnected.
To determine the structure of these product graphs we need the following three lemmas.

\begin{lemma} \label{lem:path3}
Let $X$ be a connected graph of order at least $3$.  If $P_3\, \Box\, X$ is well-dominated, then $X=K_3$.
\end{lemma}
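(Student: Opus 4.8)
The plan is to extract from the hypothesis a short list of structural constraints on $X$ and then show that only $K_3$ can satisfy all of them. Since $P_3\,\Box\,X$ is well-dominated it is well-covered, and $P_3$ is not well-covered (its maximal independent sets are the center, of size $1$, and the two leaves, of size $2$), so Theorem~\ref{thm:wc-cartesian} forces $X$ to be well-covered. By Proposition~\ref{prop:domfactors}, $\gamma(P_3\,\Box\,X)=\gamma(P_3)\,n(X)=n(X)$ and also $\gamma(P_3\,\Box\,X)=\gamma(X)\,n(P_3)=3\gamma(X)$, so $\gamma(X)=n(X)/3$; in particular $3\mid n(X)$. Finally $P_3$ has girth at least $4$, so if $X$ too had girth at least $4$ then Theorem~\ref{thm:maincp} would give $P_3=X=K_2$, a contradiction; hence $X$ contains a triangle. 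Consequently, if $n(X)=3$ then $X=K_3$ and we are done, and the whole problem reduces to ruling out $n(X)\ge 6$.

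For that I would bring in two constructions that turn oversized objects in $X$ into oversized minimal dominating sets, or into forbidden pieces, of $P_3\,\Box\,X$. First, by the fact used to prove Proposition~\ref{prop:domfactors}, if $D$ is an open irredundant dominating set of $X$ then $D\times V(P_3)$ is a minimal dominating set of $P_3\,\Box\,X$; since $P_3\,\Box\,X$ is well-dominated this set has cardinality $\gamma(P_3\,\Box\,X)=n(X)=3\gamma(X)$, forcing $|D|=\gamma(X)$. Thus $X$ has no open irredundant dominating set larger than $\gamma(X)$ — which, for example, already rules out $X=K_3\,\Box\,K_2$, where a triangle is an open irredundant dominating set of size $3>2=\gamma$. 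Second, for a maximal independent set $J$ of $X$ the set $I=(J\times\{u\})\cup(J\times\{w\})$, with $u,w$ the leaves of $P_3$, is independent in $P_3\,\Box\,X$, and since $N_X[J]=V(X)$ one checks that $(P_3\,\Box\,X)-N[I]$ is isomorphic to $X[V(X)\setminus J]$; by Observation~\ref{obs:prelimfacts} this graph must be well-dominated for every maximal independent set $J$ of $X$. More generally, removing the closed neighborhood of other carefully chosen independent sets of $P_3\,\Box\,X$ produces further graphs — possibly disconnected, where Observation~\ref{obs:components} applies — that are all forced to be well-dominated, which becomes contradictory as soon as some component is a graph already known not to be well-dominated.

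The remaining and hardest step is the case analysis that derives a contradiction from $n(X)\ge 6$ for a well-covered connected graph $X$ with a triangle and $\gamma(X)=n(X)/3$. I would start from a triangle $t_1t_2t_3$ together with a maximum independent set through each $t_i$ (these exist since any independent set of a well-covered graph extends to an $\alpha(X)$-set, and they are distinct because $t_i$ and $t_j$ are adjacent), and use the two tools above to pin down the neighborhoods around the triangle: the open-irredundant constraint limits how spread out the minimal dominating sets of $X$ can be, while the ``remove a maximal independent set'' reduction, applied repeatedly, peels $X$ down to configurations small enough to check directly against Theorem~\ref{thm:maincp} and Theorem~\ref{thm:wdCart}. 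The delicacy — and the main obstacle — is that neither tool alone suffices: the reduction to $X[V(X)\setminus J]$ leaves $K_3\,\Box\,K_2$ untouched, since there $X[V(X)\setminus J]\cong P_4$ is well-dominated, whereas the open-irredundant reduction does not obviously dispose of $K_3\,\Box\,K_3$, where the obstruction is instead that $X[V(X)\setminus J]\cong C_6$ is not even well-covered. So the real content of the proof is orchestrating both reductions — plausibly with the aid of the known structure of connected graphs achieving $\gamma=n/3$ — until $n(X)=3$ is forced.
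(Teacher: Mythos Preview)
Your proposal is not a complete proof: the ``remaining and hardest step'' --- ruling out $n(X)\ge 6$ --- is only sketched as a plan (``I would start from a triangle\ldots'', ``plausibly with the aid of\ldots'') and never carried out. The two reductions you describe are valid, but you yourself note that neither one alone disposes of all cases, and you do not show how to orchestrate them into a finite, terminating argument. As written, this is an outline of a strategy, not a proof.

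More importantly, you are missing the short direct argument that makes the case analysis unnecessary. Write $V(P_3)=\{a,b,c\}$ with $b$ the center. You already know $\gamma(P_3\,\Box\,X)=n(X)$ (indeed $\{b\}\times V(X)$ is a minimal dominating set). Now suppose some vertex $x\in V(X)$ has $\deg_X(x)\ge 3$, and set
\[
S=\bigl(\{a,b,c\}\times\{x\}\bigr)\;\cup\;\bigl(\{b\}\times(V(X)\setminus N_X[x])\bigr).
\]
The first piece dominates every $P_3$-layer over a vertex of $N_X[x]$, and the second piece (via the center $b$) dominates every remaining $P_3$-layer; hence $S$ dominates $P_3\,\Box\,X$. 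But $|S|=3+n(X)-(\deg_X(x)+1)<n(X)$, contradicting $\gamma(P_3\,\Box\,X)=n(X)$. Therefore $\Delta(X)\le 2$. Since $X$ is connected and (as you correctly argued via Lemma~\ref{lem:wcgirth4}) contains a triangle, $X=K_3$.

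So the paper's proof is essentially two lines after the setup you already have; your machinery with open-irredundant sets, $\gamma=n/3$, and iterated neighborhood deletions is correct as far as it goes but is a significant detour around this single degree bound.
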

\begin{proof}
Let $V(P_3)=\{a,b,c\}$ with $\deg(b)=2$, and suppose $P_3\, \Box\, X$ is  connected and well-dominated.  It is clear that the set $\{b\} \times V(X)$ is a minimal dominating set of $P_3\, \Box\, X$, and thus $\gamma(P_3\, \Box\, X)=n(X)$.  Since $P_3\, \Box\, X$ is well-covered, we infer that $X$ contains a triangle by applying Lemma~\ref{lem:wcgirth4}.  Suppose first that $X$ contains a vertex $x$ of degree at least $3$.  Let
\[ S=(\{a,b,c\} \times \{x\}) \cup (\{b\} \times (V(X)-N_X[x]))\,.\]
It is easy to see that $S$ dominates $P_3\, \Box\, X$.  Also, $|S|=3+n(X)-(\deg(x)+1)<n(X)$.  This is a contradiction.  Therefore, $\Delta(X) \le 2$, and
this implies that $X=K_3$.
\end{proof}

\begin{lemma}\label{lem:bipartite} Let $X$ be a connected graph of order at least $3$ and let $r$ and $s$ be positive integers.  If $2\le r \le s$ or if $r=1$ and $s \ge 3$, then
$K_{r, s} \,\Box\, X$ is not well-dominated.
\end{lemma}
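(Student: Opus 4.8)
The plan is to show that $K_{r,s}\,\Box\, X$ fails one of the necessary conditions for being well-dominated that we have already assembled, namely Proposition~\ref{prop:domfactors}. That proposition forces $\gamma(K_{r,s}\,\Box\, X) = \gamma(K_{r,s})\,n(X) = \gamma(X)\,n(K_{r,s})$. Since $\gamma(K_{r,s}) = 2$ whenever $r\ge 2$ (and $\gamma(K_{1,s}) = 1$), the first equality pins down the domination number of the product in terms of $n(X)$ alone, and the second equality then forces $\gamma(X)\,(r+s) = 2\,n(X)$ in the case $r\ge 2$, and $\gamma(X)\,(1+s) = n(X)$ in the case $r=1$, $s\ge 3$. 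First I would record these arithmetic constraints and use them to get a handle on $\gamma(X)$: in the balanced-ish case they say $\gamma(X) = \tfrac{2n(X)}{r+s}$, which is quite small relative to $n(X)$, and in the star case $\gamma(X) = \tfrac{n(X)}{s+1} \le n(X)/4$.

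Next I would construct an explicit dominating set of $K_{r,s}\,\Box\, X$ that is strictly smaller than the value dictated above, thereby contradicting Proposition~\ref{prop:domfactors}. Write $V(K_{r,s}) = A\cup B$ with $|A| = r$, $|B| = s$. A natural candidate, mimicking the construction in the proof of Lemma~\ref{lem:path3}, is to pick a vertex $x\in V(X)$ and take
\[
S \;=\; \bigl((A\cup B)\times\{x\}\bigr)\;\cup\;\bigl(\{a_0,b_0\}\times (V(X)-N_X[x])\bigr),
\]
where $a_0\in A$, $b_0\in B$ are fixed; then $S$ dominates $K_{r,s}\,\Box\, X$ because within the $X$-layer over $x$ every vertex is in $S$, and for $y\neq x$ the pair $(a_0,y),(b_0,y)$ dominates the entire $K_{r,s}$-layer over $y$ (as $A\cup B$ is a complete bipartite cover), while the $X$-layers over $N_X(x)$ are dominated from the $x$-layer. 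Counting gives $|S| = (r+s) + 2\bigl(n(X) - \deg_X(x) - 1\bigr)$. Comparing this with $\gamma(X)\,n(K_{r,s})/\! \ldots$ — more precisely with the value $2n(X)$ divided appropriately — I would choose $x$ to have large degree (taking $x$ of maximum degree, and invoking connectedness and $n(X)\ge 3$ to ensure $\Delta(X)$ is not too small) to drive $|S|$ below the required threshold. In the star case $r=1$ one can do even better, since then $\gamma(K_{1,s})\,n(X) = n(X)$ is already small and essentially any spanning-layer construction beats it once $s\ge 3$.

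I expect the main obstacle to be the bookkeeping in the boundary regime where $X$ has small maximum degree — for instance $X$ a cycle $C_k$, where $\deg_X(x) = 2$ for all $x$ and the crude bound above reads $|S| = (r+s) + 2(n(X)-3)$, which need not immediately undercut $2n(X)$ when $r+s$ is close to $2n(X)$. Handling this may require a second, sharper dominating-set construction (e.g.\ using an efficient dominating set of $X$ in one coordinate against a $2$-element dominating set of $K_{r,s}$ in the other, augmented only where the efficiency fails), or else directly exhibiting two minimal dominating sets of $K_{r,s}\,\Box\, X$ of different sizes. A cleaner route in the small-$\Delta$ case might be to instead violate well-coveredness via Lemma~\ref{lem:wcgirth4} or Theorem~\ref{thm:wc-cartesian} when $X$ itself is triangle-free, leaving only the case that $X$ contains a triangle, where a vertex of degree $\ge 3$ is then available (by connectedness and $n(X)\ge 3$, unless $X=K_3$), restoring the degree bound needed above; the residual case $X=K_3$ is then a direct finite check. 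I would organize the write-up around this case split: $X$ triangle-free (use well-cover obstructions), $X$ has a vertex of degree $\ge 3$ (use the explicit set $S$), and $X=K_3$ (direct computation).
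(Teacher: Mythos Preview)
Your plan has a genuine gap: the explicit set $S$ does not in general undercut the value $\gamma(K_{r,s})\,n(X)$ furnished by Proposition~\ref{prop:domfactors}. For $r\ge 2$ you need $|S|=(r+s)+2\bigl(n(X)-\Delta(X)-1\bigr)<2n(X)$, i.e.\ $\Delta(X)>(r+s)/2-1$. But $r+s$ is unrelated to $\Delta(X)$: take $X=K_4$ (which contains a triangle, has $n(X)\ge 4$, and $\Delta(X)=3$) and $r=s=100$; the inequality fails badly. Your escape hatch ``$X$ triangle-free $\Rightarrow$ Lemma~\ref{lem:wcgirth4}'' does not cover this, and the observation that $X$ with a triangle and $n(X)\ge 4$ has a vertex of degree $\ge 3$ only buys you $r+s\le 7$. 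In the star case $r=1$ the situation is worse: the target $\gamma(K_{1,s})\,n(X)=n(X)$ is \emph{achieved} by the layer $\{a_1\}\times V(X)$, so there is no smaller dominating set to exhibit, and your claim that ``any spanning-layer construction beats it'' is simply false.

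The paper does not try to beat the domination number; instead it builds two explicit \emph{minimal} dominating sets and equates their sizes. For $2\le r\le s$ these are
\[
D_1=\bigl(\{a_1\}\times(V(X)\setminus D_X)\bigr)\cup\bigl(\{a_2,\dots,a_r\}\times D_X\bigr)
\quad\text{and}\quad
D_3=\{a_1,b_1\}\times V(X),
\]
with $D_X$ an arbitrary minimal dominating set of $X$; minimality is checked directly. Equality $|D_1|=|D_3|$ gives $(r-2)|D_X|=n(X)$ for every such $D_X$, forcing $X$ well-dominated with $\gamma(X)=n(X)/(r-2)$, and then Proposition~\ref{prop:domfactors} yields $2n(X)=(r+s)\,n(X)/(r-2)$, i.e.\ $r=s+4$, contradicting $r\le s$. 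The star case is handled analogously with $\{a_1\}\times V(X)$ and $\bigl(\{b_1\}\times(V(X)\setminus D_X)\bigr)\cup\bigl(\{b_2,\dots,b_s\}\times D_X\bigr)$, forcing $s=2$. The point is that the contradiction is algebraic, obtained by comparing two minimal dominating sets rather than by producing a sub-$\gamma$ set; this is exactly the ``directly exhibiting two minimal dominating sets of different sizes'' route you mention as an afterthought but do not pursue.
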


\begin{proof}
Suppose to the contrary that there exists some connected graph $X$ with order at least $3$ such that $K_{r, s} \,\Box\, X$ is well-dominated. Label the vertices of the two partite sets of $K_{r, s}$ as $\{a_1, \dots, a_r\}$ and $\{b_1, \dots, b_s\}$.

Assume first that $ 2 \le r \le s$. Let $D_X$ be any minimal dominating set of $X$.  Since $X$ is connected and has order at least $3$,
it follows that $V(X)-D_X$ is also a dominating set of $X$.  We claim that
\[D_1 = (\{a_1\} \times (V(X) - D_X)) \cup (\{a_2, \dots, a_r\} \times D_X)\]
is a minimal dominating set of $K_{r, s}\,\Box\, X$. Let $(u,x) \in V(K_{r, s}\,\Box\, X)-D_1$.  If $x \in V(X) - D_X$ and $u \not\in \{a_2, \dots, a_r\}$,
then $(u, x)$ is dominated by $(a_1, x)$.   If $x \in V(X) - D_X$ and $u = a_i$ for $2 \le i \le r$, then $(u, x)$ is dominated by $(a_i, x')$ for some $x' \in D_X$.
If $x \in D_X$ and $u \ne a_1$, then $(u, x)$ is dominated by $(a_2, x)$.  If $x \in D_X$ and $u = a_1$, then $(u, x)$ is dominated by  $(a_1, x')$
for some $x' \in V(X) - D_X$ since $V(X)-D_X$ is a dominating set of $X$.   Thus, $D_1$ is in fact a dominating set of $K_{r, s}\,\Box\, X$.
To show that $D_1$ is a minimal dominating set of $K_{r, s} \,\Box\, X$ we will show that each vertex in $D_1$ has a private neighbor with respect to $D_1$.
For any $x \in V(X) - D_X$, the vertex $(b_1, x)$ is a private neighbor of $(a_1, x)$ with respect to $D_1$.  Each vertex of $\{a_2, \dots, a_r\} \times D_X$ has a private neighbor with respect to $D_1$ in its $X$-layer since $D_X$ is a minimal dominating set of $X$ and $\{a_1, \dots, a_r\}$ is an independent set in $K_{r, s}$.
Therefore, $D_1$ is a minimal dominating set of $K_{r, s} \,\Box\, X$ of cardinality $n(X)+(r-2)|D_X|$.

One can easily verify that $D_3 = \{a_1, b_1\} \times V(X)$ is a minimal dominating set of $K_{r, s}\,\Box\, X$.  Since $K_{r, s}\,\Box\, X$
is well-dominated, it follows that
\[n(X)+(r-2)|D_X|=|D_1|=|D_3|=2n(X)\,,\]
and hence $(r-2)|D_X|=n(X)$.  Since $D_X$ is an arbitrary minimal dominating set of $X$, this implies that $X$ is well-dominated and $\gamma(X)=\frac{n(X)}{r-2}$.
By Proposition~\ref{prop:domfactors},
\[\gamma(K_{r, s}\,\Box\, X)=\gamma(K_{r,s})n(X)=\gamma(X)n(K_{r,s})=\frac{r+s}{r-2}n(X)\,,\]
which implies that $r=s+4$.  This contradiction shows that if $2\le r \le s$, then $K_{r, s} \,\Box\, X$ is not well-dominated.

Finally, suppose $r=1$ and $s \ge 3$ and let $D_X$ be an $\alpha(X)$-set.  Using arguments similar to those above, it follows that both
$\{a_1\} \times V(X)$ and $(\{b_1\} \times (V(X) -  D_X)) \cup (\{b_2, \dots, b_s\} \times D_X)$ are minimal dominating sets of
$K_{1, s} \,\Box\, X$. Therefore,
\[n(X) = n(X) - |D_X| + (s-1)|D_X|,\]
which implies $s=2$, another contradiction.
\end{proof}

Finally, we identify one more class of Cartesian products that are not well-dominated.
Let $\cF_1$ denote the class of all graphs of order at least $4$ obtained by attaching some finite number of leaves to the vertices of a complete graph of order $3$.
Let $\cF_2$ be the class of all graphs of order at least $5$ constructed from a complete graph of order $4$ by attaching some finite number of leaves to at most three of its vertices.

\begin{lemma}\label{lem:spwd} Let $F_1 \in \cF_1$ and let $F_2 \in \cF_2$.  If $X$ is any connected graph of order $3$ or more,
then neither $F_1 \,\Box\, X$ nor $F_2 \,\Box\, X$ is  well-dominated.
\end{lemma}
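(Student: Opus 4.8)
Assume for contradiction that $F\,\Box\,X$ is well-dominated, where $F\in\cF_1\cup\cF_2$ and $X$ is connected with $n(X)\ge 3$. Write $C$ for the underlying clique of $F$ (so $C=K_3$ if $F\in\cF_1$ and $C=K_4$ if $F\in\cF_2$), and for $v\in C$ let $S_v$ be the set of leaves attached to $v$. Let $t$ be the number of vertices of $C$ that bear at least one leaf; by the definitions of $\cF_1$ and $\cF_2$, $1\le t\le 3$. Relabel the vertices of $C$ so that $v_1,\dots,v_t$ are exactly the leaf-bearing ones, put $S_i=S_{v_i}$, $s_i=|S_i|\ge 1$, and note $n(F)=|C|+\sum_{i=1}^{t}s_i$. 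The first small step is that $\gamma(F)=t$: the sets $N_F[S_i]=S_i\cup\{v_i\}$ for $1\le i\le t$ are pairwise disjoint, forcing $\gamma(F)\ge t$, while $\{v_1,\dots,v_t\}$ dominates $F$ since $C$ is a clique. As $F$ and $X$ are nontrivial and connected, Proposition~\ref{prop:domfactors} then gives $\gamma(F\,\Box\,X)=\gamma(F)n(X)=\gamma(X)n(F)=t\cdot n(X)$.

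Next I would split on $t$. When $t\ge 2$, fix a $\gamma(X)$-set $D_X$ and, for each $j\in\{1,\dots,t\}$, consider
$$D_j=\bigl(\{v_j\}\times V(X)\bigr)\cup\bigcup_{i\ne j}\bigl(S_i\times D_X\bigr)\,.$$
A short verification (using that $C$ is a clique and that $D_X$ dominates $X$) shows each $D_j$ is a dominating set of $F\,\Box\,X$ of size $n(X)+\gamma(X)\sum_{i\ne j}s_i$. If some $|D_j|<t\cdot n(X)=\gamma(F\,\Box\,X)$, we have an immediate contradiction; otherwise $\gamma(X)\sum_{i\ne j}s_i\ge(t-1)n(X)$ for every $j$, and summing over $j$ — using $\sum_{j}\sum_{i\ne j}s_i=(t-1)\sum_i s_i$ and $\gamma(X)n(F)=t\cdot n(X)$ — yields $\sum_i s_i\ge n(F)=|C|+\sum_i s_i$, which is absurd since $|C|\ge 3$.

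When $t=1$ (all leaves of $F$ hang on $v_1$), I would instead exhibit two minimal dominating sets of $F\,\Box\,X$ of different cardinalities. The set $A=\{v_1\}\times V(X)$ is dominating and minimal: for each $x$ and any $\ell\in S_1$, the only vertex of $A$ in $N[(\ell,x)]$ is $(v_1,x)$, so $(\ell,x)$ is a private neighbor of $(v_1,x)$ with respect to $A$; thus $|A|=n(X)$. For the second set, let $D_X$ be an open irredundant minimum dominating set of $X$ (which exists by Bollob\'{a}s and Cockayne~\cite{bc-1979}), choose a leaf-free vertex $v_2\in C\setminus\{v_1\}$ and another vertex $v_3\in C\setminus\{v_1,v_2\}$, and set $B=(S_1\times D_X)\cup(\{v_2\}\times V(X))$. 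I would check that $B$ dominates, and that it is minimal: $(v_3,x)$ is a private neighbor of $(v_2,x)$ because $v_3$ has no leaves, and for $(\ell,x)\in S_1\times D_X$ the vertex $(\ell,x^{*})$ is a private neighbor, where $x^{*}\notin D_X$ is a private neighbor of $x$ with respect to $D_X$. Since $|B|=s_1\gamma(X)+n(X)>n(X)=|A|$, $F\,\Box\,X$ is not well-dominated.

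The step I expect to be most delicate is the minimality of $B$ in the case $t=1$: it is exactly here that one needs $D_X$ to be open irredundant (so that every $(\ell,x)$ with $x\in D_X$ keeps a private neighbor inside its own leaf-layer) and that one needs a leaf-free clique vertex $v_3$ to supply a private neighbor for the entire $v_2$-layer. A secondary point needing care is confirming that the sets $D_j$ in the case $t\ge 2$ really are dominating — the leaves in an $S_i$-layer over $V(X)\setminus D_X$ are covered only because $D_X$ is a dominating set of $X$, which is where the hypotheses on $X$ enter.
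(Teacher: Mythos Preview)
Your argument is correct, and for $t\ge 2$ it takes a genuinely different and more economical route than the paper's.  The paper treats $F_1$ and $F_2$ separately and, within each, splits into three subcases according to how many of the leaf-sets $L_i$ are nonempty; in every subcase it builds several \emph{minimal} dominating sets, verifies their minimality, equates their cardinalities, and then combines this with Proposition~\ref{prop:domfactors} to force an arithmetic contradiction (e.g., $|L_1|=|L_1|+1$).  Your approach for $t\ge 2$ is uniform across $\cF_1$ and $\cF_2$ and avoids all minimality checks: you only need that each $D_j$ is a dominating set, so that $|D_j|\ge\gamma(F\,\Box\,X)=t\,n(X)$, and then summing these $t$ inequalities together with $\gamma(X)n(F)=t\,n(X)$ yields $\sum_i s_i\ge |C|+\sum_i s_i$.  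This is a real simplification.  For $t=1$ the two arguments essentially coincide (two explicit minimal dominating sets of different sizes); your appeal to an open irredundant $D_X$ is a safe choice, although in fact any minimal dominating set of $X$ would do here, since each leaf $\ell\in S_1$ is adjacent in $F$ only to $v_1$, so the $S_1\times D_X$ part of $B$ inherits private neighbors directly from $D_X$.  One cosmetic remark: the sentence ``If some $|D_j|<t\cdot n(X)$\dots; otherwise\dots'' can be dropped, since $D_j$ being dominating already forces $|D_j|\ge t\,n(X)$.
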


\begin{proof}
Suppose for the sake of obtaining a contradiction that $F_1 \,\Box\, X$ is well-dominated.  Let $y_1,y_2$ and $y_3$ be the vertices of $F_1$ that have degree at least $2$.
We let $L_i$ represent the set of leaves adjacent to $y_i$ for $i \in [3]$ and let $D_X$ be a  minimum dominating set of $X$. By the definition of $\cF_1$, at
least one of $y_1,y_2,y_3$ is adjacent to a leaf.  The argument is divided into three parts depending on how many of the sets $L_1,L_2,L_3$ are nonempty.

Suppose first that $L_i \ne \emptyset$ for every $i \in [3]$. Note that $\{y_1, y_2, y_3\}$ is an open irredundant dominating set of $F_1$ and therefore $D_1 = \{y_1, y_2, y_3\} \times V(X)$ is a minimal dominating set of $F_1\,\Box\, X$. Next, consider $D_2 = (\{y_1\} \times V(X)) \cup ((L_2 \cup L_3)\times D_X)$. We claim that $D_2$ is a minimal dominating set of $F_1 \,\Box\, X$. To see this, note that $(g, h)$ is dominated by $(y_1, h)$ if $h \in V(X)$ and $g \not\in L_2 \cup L_3$. If $g \in L_2 \cup L_3$, then $(g, h)$ is dominated by some $(g, h')$ where $h' \in D_X$. Moreover, $(x, h)$ is a private neighbor of $(y_1, h)$ where $x \in L_1$ and each vertex of $(L_2 \cup L_3) \times D_X$ has a private neighbor in its $X$-layer since $L_2 \cup L_3$ is an independent set in $F_1$.
Since $F_1 \,\Box\, X$ is well-dominated, it follows that
\[3n(X) = |D_1| = |D_2| = n(X) + (|L_2| + |L_3|)|D_X|.\]
Furthermore, $D_2' = (\{y_2\} \times V(X)) \cup ((L_1 \cup L_3)\times D_X)$ and $D_2'' = (\{y_3\} \times V(X)) \cup ((L_1 \cup L_2)\times D_X)$ are also  minimal dominating sets of $F_1 \,\Box\, X$. Thus, $|L_1| = |L_2| = |L_3|$. Therefore, $3n(X) = n(X) + 2|L_1||D_X| = n(X) + 2|L_1|\gamma(X)$ from which it follows that $n(X)/\gamma(X) = |L_1|$. On the other hand, since $\gamma(F_1)=3$, it follows
by Proposition~\ref{prop:domfactors} that  $\gamma(X)n(F_1)=\gamma(F_1)n(X) =3n(X)$, or equivalently, $n(X)/\gamma(X) = n(F_1)/3$. Hence,
\[|L_1| = \frac{n(X)}{\gamma(X)} = \frac{n(F_1)}{3} = |L_1| + 1,\]
which is clearly a contradiction. Therefore, this case cannot occur.

Next, suppose that $L_1 = \emptyset$ and $L_i \ne \emptyset$ for $i \in \{2, 3\}$. Note that $A_1 = (\{y_2\} \times V(X) ) \cup (L_3 \times D_X)$,
$A_2 = (\{y_1\} \times (V(X) - D_X)) \cup ((L_2 \cup L_3) \times D_X)$, and $A_3 = (\{y_3\} \times V(X) ) \cup (L_2 \times D_X)$ are all minimal dominating sets of $F_1 \,\Box\, X$. Therefore, using calculations similar to the first case above, we get
$|A_1| = |A_2|$, meaning $|L_2| = 1$ and similarly, $|L_3| = 1$. However, $\{y_2, y_3\} \times V(X)$ is also a minimal dominating set and now
\[n(X) + |D_X| = |A_1| = 2n(X)\] which cannot be for any nontrivial connected graph $X$.

Finally, assume $L_1 = L_2 = \emptyset$ and $L_3 \ne \emptyset$. As above, we know $\{y_3\} \times V(X)$ and
$(\{y_2\} \times V(X)) \cup (L_3 \times D_X)$ are minimal dominating sets of $F_1 \,\Box\, X$. It follows that
\[n(X) = n(X)  + |L_3||D_X|\,,\]
which implies $|L_3| =0$, another contradiction.  Therefore, $F_1 \,\Box\, X$ is not well-dominated.

Now suppose there exists a connected graph $X$ or order at least $3$ such that $F_2 \,\Box\, X$ is well-dominated.
Next, let $\{y_1,y_2,y_3,y_4\}$ induce a complete subgraph of $F_2$ and let $\deg(y_4)=3$.
Let $L_i$ be the set of leaves adjacent to $y_i$ for $i \in [3]$ and let $D_X$ be a minimum dominating set of $X$.

Suppose first that $L_i \ne \emptyset$ for $i \in [3]$. Note that $\{y_1, y_2, y_3\}$ is an open irredundant set of $F_2$ and therefore $D_1 = \{y_1, y_2, y_3\} \times V(X)$ is a minimal dominating set of $F_2\,\Box\, X$. By an argument similar to the first case above when we were analyzing $F_1 \,\Box\, X$, it follows that
$|L_1| = |L_2| = |L_3|$, $n(X)/\gamma(X) = |L_1|$, and $n(X)/\gamma(X) = n(F_2)/3$.  Hence,
\[|L_1| = \frac{n(X)}{\gamma(X)} = \frac{n(F_2)}{3} =\frac{4+3|L_1|}{3} = |L_1| + \frac{4}{3},\]
which is clearly a contradiction. Therefore, this case cannot occur.

Next, suppose that $L_1 = \emptyset$ and $L_i \ne \emptyset$ for $i \in \{2, 3\}$.  Note that
$(\{y_1\} \times V(X)) \cup ((L_2 \cup L_3) \times D_X)$ and $(\{y_3\} \times V(X) ) \cup (L_2 \times D_X)$ are both minimal dominating sets of
$F_2\,\Box\, X$.  This implies that $|L_3| = 0$, which is a contradiction.

Therefore, we may assume $L_1 = L_2 = \emptyset$ and $L_3 \ne \emptyset$.
It follows that $\{y_3\} \times V(X)$ and $(\{y_2\} \times V(X)) \cup (L_3 \times D_X)$ are minimal dominating sets of $F_2 \,\Box\, X$, which implies $L_3 = \emptyset$.
This final contradiction completes the proof.
\end{proof}

\section{The Characterization} \label{sec:completefactor}

In this section we prove Theorem~\ref{thm:CartesianWD}, which is restated below for the reader's convenience.
A fact that we will use a number of times is the following.

\begin{obs} \label{obs:wdreduction}
If $I_G$ is a maximal independent set in $G$ and $I_H$ is a maximal independent set in $H$, then $I_G \times I_H$ is independent
in $G\,\Box\,H$ and
\[G\,\Box\,H - N_{G\,\Box\,H}[I_G \times I_H]=(G-I_G)\,\Box\,(H-I_H)\,.\]
If in addition $G\,\Box\,H$ is well-dominated, then by Observation~\ref{obs:prelimfacts} it follows that
$(G-I_G)\,\Box\,(H-I_H)$ is well-dominated.
\end{obs}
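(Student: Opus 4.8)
The plan is to verify the three assertions in order, each of which reduces to a direct computation with the defining adjacency rule of the Cartesian product together with the maximality of $I_G$ and $I_H$.

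First, to see that $I_G \times I_H$ is independent in $G\,\Box\,H$, I would take two distinct vertices $(g_1,h_1),(g_2,h_2)$ of $I_G\times I_H$ and recall that adjacency in the Cartesian product forces equality in one coordinate and adjacency in the other. If $g_1=g_2$, then $h_1$ and $h_2$ would have to be adjacent in $H$, contradicting the independence of $I_H$ (note $h_1\ne h_2$ since the vertices are distinct); the symmetric case is identical. Hence no two vertices of $I_G\times I_H$ are adjacent.

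Second, for the layer description I would compute $N_{G\,\Box\,H}[I_G\times I_H]$ explicitly. For a single vertex $(g_0,h_0)$ one has $N_{G\,\Box\,H}[(g_0,h_0)] = \{(g_0,h_0)\}\cup(N_G(g_0)\times\{h_0\})\cup(\{g_0\}\times N_H(h_0))$, so taking the union over $(g_0,h_0)\in I_G\times I_H$ gives
\[ N_{G\,\Box\,H}[I_G\times I_H] = (I_G\times I_H)\cup(N_G(I_G)\times I_H)\cup(I_G\times N_H(I_H))\,.\]
Now I invoke maximality: since $I_G$ is a maximal independent set, $N_G[I_G]=V(G)$, and independence gives $N_G(I_G)\cap I_G=\emptyset$, so $N_G(I_G)=V(G)-I_G$; likewise $N_H(I_H)=V(H)-I_H$. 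Substituting and taking complements in $V(G)\times V(H)$, a short case check on whether each coordinate lies in the corresponding independent set shows that the complement is exactly $(V(G)-I_G)\times(V(H)-I_H)$, the vertex set of $(G-I_G)\,\Box\,(H-I_H)$. To promote this vertex-set identity to a graph identity, I note that both $G\,\Box\,H - N_{G\,\Box\,H}[I_G\times I_H]$ and $(G-I_G)\,\Box\,(H-I_H)$ are subgraphs of $G\,\Box\,H$ induced on the same vertex subset --- the former by the definition of vertex deletion, the latter because the Cartesian product of induced subgraphs inherits precisely the edges of $G\,\Box\,H$ lying within that subset --- so the two graphs coincide.

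Finally, the well-domination conclusion is immediate: $I_G\times I_H$ is an independent set in the well-dominated graph $G\,\Box\,H$, so Observation~\ref{obs:prelimfacts} applies directly to give that $G\,\Box\,H - N_{G\,\Box\,H}[I_G\times I_H]$ is well-dominated, and this graph was just identified with $(G-I_G)\,\Box\,(H-I_H)$. The only point that requires any care is the second step, specifically the use of maximality to rewrite $N_G(I_G)$ as $V(G)-I_G$ (and similarly for $H$); without maximality the displayed decomposition of the closed neighborhood still holds, but its complement need not be a full Cartesian product. I do not expect a genuine obstacle here --- the statement is essentially a bookkeeping lemma packaging a standard fact for repeated use in Section~\ref{sec:completefactor}.
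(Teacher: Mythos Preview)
Your argument is correct in every particular; the computation of $N_{G\,\Box\,H}[I_G\times I_H]$ and its complement is exactly what one would write if forced to justify the statement in full, and the appeal to Observation~\ref{obs:prelimfacts} is the intended closing step. The paper itself offers no proof at all---it records the statement as an observation and moves on---so your write-up is not so much a different approach as a complete unpacking of what the authors regard as immediate from the definitions.
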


\medskip

\noindent \textbf{Theorem~\ref{thm:CartesianWD}} \emph{
If $G\,\Box\, H$ is connected and well-dominated, then  $G$ or $H$ is a complete graph.
}

\begin{proof}
Suppose to the contrary that there exists a connected, well-dominated Cartesian product $G\,\Box\, H$ such that neither $G$ nor $H$ is a complete graph.  By Observation~\ref{obs:connectedCP}, both $G$ and $H$ are connected.  Among all such counterexamples, choose one where the order of $G\,\Box\, H$ is minimal. Since $G\,\Box\, H$ is also well-covered, it follows from Theorem~\ref{thm:wc-cartesian} that at least one of $G$ or $H$ is well-covered. Without loss of generality, we assume that $G$ is well-covered. Moreover, by Lemma~\ref{lem:path3}  we may assume that $G$ and $H$ each have order at least $4$.  Let $I_G$ be any maximal independent set of $G$ and let $I_H$ be any  maximal independent set of $H$.  By Observation~\ref{obs:wdreduction}, $(G-I_G)\,\Box\,(H-I_H)$ is well-dominated.  To simplify notation we will
let $G'=G-I_G$ and $H'=H-I_H$.
By Observation~\ref{obs:components} and the choice of $G \,\Box\, H$, the conclusion of the theorem holds for each component of $G'\,\Box\, H'$.
In particular, let $G_1, \ldots, G_{\ell}$ be the components of $G'$ and let $H_1,  \ldots, H_n$ be the components of $H'$. Thus, for each $(i,j) \in [\ell] \times [n]$,
$G_i$ or $H_j$ is a complete graph. Furthermore, by Theorem~\ref{thm:wdCart} this implies that if the orders of $G_i$ and $H_j$ are both at least $2$, then $G_i \,\Box\,  H_j$  is isomorphic to $K_m \,\Box\,  K_m$ for some $m \ge 2$ or to $P_3\,\Box\, K_3$.

We claim that we need not consider the case when all components of $H'$ are isolated vertices.  For suppose $I_H$ is a maximal independent set such that $H'=H-I_H$ consists
entirely of isolated vertices.  This implies that $H$ is bipartite.  Since $G$ is connected and $G$ and $H$ both have order at least $4$, it follows by Lemma~\ref{lem:bipartite} that $H$ is not a complete bipartite graph, which allows us to select a maximal independent set $J$ in $H$ so that $H - J$ contains an edge.  Thus we may assume $I_H$ has been chosen so that $H' = H- I_H$ contains a nontrivial component. Similarly, we may assume that the maximal independent set $I_G$ of $G$ has been chosen so that $G'=G-I_G$ contains a nontrivial component.

By Theorem~\ref{thm:wdCart} it follows that each component of $H'$ is either $K_1$, $P_3$, or $K_m$ for some $m \ge 2$.  The argument is now split into
four cases depending on the components of $H'$.

{\bf Case 1.} [{\bf $H'$ has $P_3$ as a component.}]
\vskip2mm
By Lemma~\ref{lem:path3}, every component of $G'$ is a complete graph of order $3$.  Furthermore, if $I$ is any maximal independent set of $G$,
then every component of $G-I$ is complete of order $3$.  For each $i \in [\ell]$ let $V(G_i)=\{x_1^i, x_2^i,x_3^i\}$.
Let $i \in [\ell]$.   Extend $\{x_1^i\}$ to a maximal independent set $I$ of $G$.  The edge $x_2^ix_3^i$ is in the graph $G-I$, which implies that there is a
vertex $v\in V(G-I)$ such that $\{vx_2^i,vx_3^i\} \subseteq E(G)$.  Note that $v\in I_G$, for otherwise the subgraph of $G$ induced by  $\{x_1^i,x_2^i,x_3^i,v\}$ is a subgraph of $G-I_G$.
Now extend $\{x_2^i\}$ to a maximal independent set $I'$ of $G$.  Since $\{x_1^ix_3^i,vx_3^i\} \subseteq E(G-I')$, we infer that $vx_1^i \in E(G)$.  Therefore,
for every $i \in [\ell]$, there exists $y_i \in I_G$ such that $\{y_i,x_1^i,x_2^i,x_3^i\}$ induces a complete graph of order $4$ in $G$.   Note that we have also proved here that if a vertex of $I_G$ is adjacent to two vertices of a triangle in $G$, then it is also adjacent to the other vertex of the triangle. (See the argument for $v$.)

We claim that $G = K_4$. To see this, note that since $G$ is connected, there exists a path from $x_1^1$ to $x_1^2$. Suppose first that for some $i \in \{2, \dots, \ell\}$, $j \in [3]$, and $k\in [3]$ that $x_j^1$ is adjacent to some vertex $z \in I_G - \{y_1\}$ and $z$ is also adjacent to $x_k^i$. Reindexing if necessary, we may assume $j=k=1$. Extend $\{x_2^1, x_1^i\}$ to a maximal independent set $I_1$ of $G$. Now the set $\{y_1, x_1^1, x_3^1, z\}$ belongs to a single component of $G- I_1$, which is a contradiction.
Therefore, if $w\in (I_G-\{y_1\}) \cap N(V(G_1))$, then $N(w) \subseteq V(G_1)$.
Next, suppose $y_1$ has a neighbor $x_k^i$ for some $k \in [3]$ and $i \in \{2, \dots, \ell\}$. Reindexing if necessary, we may assume $k=1$ and $i=2$. Extend $\{x_1^1, x_2^2\}$ to a maximal independent set $I_2$ of $G$. Now the set $\{x_2^1, x_3^1, y_1, x_1^2\}$ belongs to a single component of $G- I_2$, another contradiction. It follows that $N_G[y_1] = V(G_1)\cup \{y_1\}$ and $G= K_4$. This is a contradiction.

{\bf Case 2.}  [{\bf Some component of $H'$ is a complete graph of order $m$ for some $m \ge 4$.}]
\vskip2mm
Applying Theorem~\ref{thm:wdCart} we see that each component of $G'$ is either an isolated vertex or a complete graph of order $m$.  Furthermore, by our choice
of the maximal independent set $I_G$, at least one component of $G'$ is isomorphic to $K_m$.

Assume first that some component of $G'$ is $K_1$.   Therefore, we write $G' = G_1 \cup \cdots \cup G_{\alpha} \cup G_{\alpha+1} \cup \cdots \cup G_{\ell}$ where $G_i = K_1$ for $i \in [\alpha]$ and $G_i = K_m$ for $\alpha+1 \le i \le \ell$.  For each $i$  with $\alpha+1 \le i \le \ell$, let $V(G_i)= \{x_1^i, \ldots, x_m^i\}$ and let
$V(G_j)=\{x_1^j\}$ for $1 \le j \le \alpha$.  Let $J$ be the  maximal independent set of $G$ defined by $J = C \cup \bigcup_{i=1}^{\ell} \{x_1^i\}$ where $C = I_G - N_G[\cup_{i=1}^{\ell} \{x_1^i\}]$. Since $G$ is well-covered, $|J| = |I_G|$, which implies $\ell \le |I_G|$. Using the same argument as above $G'' = G - J$ is the disjoint union of  components  each of which is either $K_1$ or $K_m$.  Note that $I_G-C \subseteq V(G)-J=V(G'')$.  It follows that each vertex $x \in I_G - C$ is either an isolate in $G''$ or $x$ is adjacent to every vertex in $V(G_i) - \{x_1^i\}$ for some $\alpha+1 \le i \le \ell$ and $x$ is not adjacent to any vertex of $V(G_j) - \{x_1^j\}$ for every $j \in \{\alpha+1, \ldots, \ell\}-\{i\}$.

Let $\{Y_G, Z_G, M_G\}$ be the weak partition of $I_G$ defined as follows.
\begin{itemize}
\item $Y_G$ is the set of all $g \in I_G$ such that $V(G_i) \cup \{g\}$ induces in $G$ a complete graph (of order $m+1$) for some $i$ with $\alpha+1 \le i \le \ell$,
\item $Z_G$ is the set of all $g \in I_G$ that are isolated in $G''$, and
\item $M_G = I_G-(Y_G \cup Z_G)$.
\end{itemize}
First we claim that $|Y_G|=\ell-\alpha$.  Let $i$ be an index with $\alpha+1 \le i \le \ell$.  Since each component of $G''$ is either $K_1$ or
$K_m$ and the subgraph of $G''$ induced by $V(G_{i})-\{x_1^{i}\}$ is a complete graph of order $m-1$, we see that there exists a vertex
$y \in I_G$  that is adjacent to every vertex in $V(G_{i})-\{x_1^{i}\}$ and is not adjacent to any other vertex in $G''$.
Suppose $y$ is not adjacent to $x_1^{i}$.  Extend $\{x_2^{i}\}$ to a maximal independent set $I$ of $G$.
In $G-I$ the set $(V(G_{i})-\{x_2^{i}\}) \cup \{y\}$ induces a subgraph of order $m$ that is not a complete graph.  This is a contradiction,
and thus $y \in Y_G$.   This also shows that  $|Y_G| \ge \ell-\alpha$.  Furthermore, suppose there exist distinct vertices $u$ and $v$ in $Y_G$
such that $V(G_j) \subseteq N_G(u) \cap N_G(v)$ for some $\alpha+1 \le j \le \ell$.
By enlarging $\{x_1^j\}$ to a maximal independent set $I'$ of $G$, we see that $u$ and $v$ belong to a component of order $m+1$ of $G-I'$.  This contradiction shows
that $|Y_G| \le \ell-\alpha$.  We thus denote $Y_G$ as $\{y_1, \ldots,y_t\}$, where $t=\ell-\alpha$ and such that $V(G_{\alpha +i}) \subseteq N_G(y_i)$
for each $1 \le i \le t$.

Next, we claim that $M_G\subseteq C$. To see this, suppose $x \in M_G$.  Then, $x$ is not isolated in $G''$, and it follows that $x$ is adjacent to a vertex $x_j^i$  for some $2 \le j \le m$ and some $\alpha+1\le i \le \ell$ since $G$ is connected. Reindexing if necessary, we may assume $x$ is adjacent to $x_2^{\alpha+1}$. Since $x \not\in Y_G$, there exists $k \in [m]-\{2\}$ such that
$xx_k^{\alpha+1}\not\in E(G)$. Suppose $x$ is adjacent to a vertex in $V(G_{\alpha+1}) - \{x_2^{\alpha+1}\}$. When we extend $\{x_2^{\alpha+1}\}$ to a maximal independent set $I$ of $G$, the set $(V(G_{\alpha+1}) - \{x_2^{\alpha+1}\}) \cup \{x\}$ is contained in a component that is complete, which implies $x$ is adjacent to every vertex of $V(G_{\alpha+1})$, a contradiction. Thus, $x$ is not adjacent to $x_1^{\alpha+1}$. Moreover, if $x$ is adjacent to
$x_1^m$ for some $m \in [\ell]-\{\alpha+1\}$, then when we extend $\{x_3^{\alpha+1}, x_1^m\}$ to a maximal independent set $I$, there exists a component in $G-I$ containing $\{x\} \cup (V(G_{\alpha+1}) - \{x_3^{\alpha+1}\})$, which is a contradiction. Thus, $x \in I_G - N_G[\cup_{i=1}^{\ell} \{x_1^i\}] = C$.

Let $Q_2 = \left(\bigcup_{i=1}^{\alpha}\{x_1^i\}\right) \cup \left( \bigcup_{i=\alpha+1}^{\ell}\{x_2^i\}\right)$ and let $M_2 = I_G - N_G[Q_2]$.  The set $Q_2$ is independent and the set $T_2$
defined by $T_2 = M_2 \cup Q_2$ is a maximal independent set of $G$.  Let $G''' = G - T_2$.  Recall that  $H'$ has a component isomorphic to $K_m$.
Using the same argument as above, we see that each component of $G'''$ is a complete graph of order $1$ or $m$, and at least one of these components has order $m$
since $x_1^{\alpha+1}x_3^{\alpha+1}$ is an edge in $G'''$.

Suppose $Z_G \neq \emptyset$.  Let $z \in Z_G$.
Since $z$ is an isolate in $G''$, it follows that $z$ is not adjacent in $G$  to any vertex in $V(G_k)-\{x_1^k\}$, for $\alpha+1 \le k \le \ell$.
Also, $z$ is not adjacent in $G$  to $x_1^k$ for any $k$ with $\alpha+1 \le k \le \ell$ since $z$ is not adjacent to $x_3^k$, for otherwise
$z$ is contained in a component of $G'''$  that is not a complete graph of order $m$.
Thus, $N_G(z) \subseteq \{x_1^1, \ldots,x_1^{\alpha}\}$.

Since $G$ is connected, there is a path in $G$ from $z$ to $y_i$ for each $i\in[t]$.  Without loss of generality we may assume that a shortest such
path, say $P$, is a $z,y_1$-path.  Recall, as we proved above, that $M_G \subseteq C$, which implies that $N_G(v) \cap (\cup_{i=1}^{\ell} \{x_1^i\})=\emptyset$
for every $v \in M_G$.  Recall also that $N_G(z) \subseteq \{x_1^1, \ldots,x_1^{\alpha}\}$.  From this we infer that
\[V(P) \subseteq \{y_1\} \cup Z_G \cup \{x_1^1, \ldots,x_1^{\alpha}\}\,.\]
Reindexing $\{x_1^1,\ldots,x_1^{\alpha}\}$  if necessary, we may assume
without loss of generality that $y_1x_1^1$ and $z'x_1^1$ are both edges of $G$ for some $z'\in Z_G$. Since $N_G(z') \subseteq \{x_1^1, \ldots,x_1^{\alpha}\}$,
expand the independent set $\{z', x_2^{\alpha+1}\}$ to a maximal independent set $I$ of $G$.
Now we see that $\{x_1^1, y_1\} \cup (V(G_{\alpha+1})-\{x_2^{\alpha+1}\})$ does not induce a complete graph of order $m$ in $G-I$.
This is a contradiction, and therefore $Z_G = \emptyset$.  As a result
\[|I_G| = |Y_G| + |M_G| \le \ell + |M_G| \le \ell + |C| = |J|=|I_G|.\]
This implies that $|Y_G|=\ell$, which in turn implies that $G'$ has no isolated components.  This contradicts our assumption.  Hence, every component of $G'$
is a complete graph of order $m$.

Since $G$ is well-covered,
\[ |M_2| + \ell = |M_2| + |Q_2| = |T_2| = |I_G| = |M_G| + \ell.\]
Thus, $|M_2| = |M_G|$. Moreover, $M_2 \subseteq M_G$ as each vertex $w$ in $M_2$ is not adjacent to any vertex of the form $x_2^k$ for
$k \in [\ell]$ and so $w \notin Y_G$.
It follows that $M_G = M_2$. In fact, if for $3 \le j \le m$ we define
\[Q_j = \bigcup_{i=1}^{\ell} \{x_j^i\} \hskip15mm \text{ and } \hskip15mm M_j = I_G - N_G[Q_j],\]
then $T_j = M_j \cup Q_j$ is a maximal independent set of $G$ from which - as we did above - we may conclude that $M_j = M_G$.
That is, $M_G$  consists of vertices which are not adjacent to any vertex of $G_1 \cup \cdots \cup G_{\ell}$. Since $G$ is connected, it follows that
$M_G= \emptyset$.  Moreover, we know from above that $y_i$ cannot have a neighbor in $G_{\alpha+j}$ when $i \ne j$.
Therefore, $I_G$ consists of a single vertex adjacent to every vertex in $G'$.  That is, $G = K_{m+1}$, which contradicts the choice of $G \,\Box\,  H$.

{\bf Case 3}. [{\bf Some component of $H'$ is $K_3$.}]
\vskip2mm

We may assume that for every maximal independent set $I$ of $H$ the graph $H- I$ does not contain a component isomorphic to a path of order $3$ or a complete graph of order at least $4$,  for otherwise we are in Case 1 or Case 2.  Since $H'$ contains a $K_3$,  it follows from
Theorem~\ref{thm:wdCart} that every component of $G'$ is either $K_1$, $K_3$, or $P_3$.  Furthermore, by the choice of $I_G$,  at least one component of $G'$  nontrivial.  By Theorem~\ref{thm:wdCart} we infer that no component of $H'$ is $K_2$.

{\bf Subcase 3.1}. [{\bf $H'$ contains an isolated vertex.}]
\vskip2mm

We write $H' = H_1 \cup \cdots \cup H_{\sigma} \cup H_{\sigma +1} \cup \cdots \cup H_n$ where $H_i = K_1$ for $i \in [\sigma]$ and $H_i = K_3$ for $\sigma+1 \le i \le n$.
For each $i$  with $\sigma+1 \le i \le n$, let $V(H_i)= \{x_1^i, x_2^i,  x_3^i\}$ and let
$V(H_j)=\{x_1^j\}$ for $1 \le j \le \sigma$.  The set $J$ defined by  $J = C \cup \bigcup_{i=1}^{n} \{x_1^i\}$, where $C = I_H - N_H[\cup_{i=1}^{n} \{x_1^i\}]$,
is a maximal independent set in $H$. Using the same argument as above, the components of  $H'' = H - J$ are each complete graphs of order $1$ or $3$ and at least one of these components has order $3$ since $x_2^{\sigma+1}x_3^{\sigma+1}$ is an edge in $H''$.  It follows that each vertex $u \in I_H - C$ is either an isolate in $H''$ or $u$ is adjacent to every vertex in $V(H_i) - \{x_1^i\}$ for some $\sigma+1 \le i \le n$ and $u$ is not adjacent to any vertex of $V(H_j) - \{x_1^j\}$ for every $j \in \{\sigma+1, \ldots, n\}-\{i\}$.
\vskip2mm

Let $\{Y_H, Z_H, M_H\}$ be the weak partition of $I_H$ defined as follows.
\begin{itemize}
\item $Y_H$ is the set of vertices $h$ in $I_H$ such that $V(H_i) \cup \{h\}$ induces a complete graph of order $4$ in $H$ for some $\sigma+1 \le i \le n$,
\item $Z_H$ is the set of vertices in $I_H$ that are isolated in $H''$, and
\item $M_H=I_H-(Y_H \cup Z_H)$.
\end{itemize}

First we claim that $Y_H \ne \emptyset$. To see this, recall that $H''$ has a component isomorphic to $K_3$ and every nontrivial
component of $H''$ is a complete graph of order $3$.  For each $i \in [n]-[\sigma]$, the vertices
$x_2^i$ and $x_3^{i}$ are adjacent in $H''$, and thus there exists a vertex $y \in I_H$  that is adjacent to both $x_2^{i}$ and $x_3^{i}$.
Suppose $y$ is not adjacent to $x_1^{i}$.  Extend $\{x_2^{i}\}$ to a maximal independent set $I$ of $H$.
In $H-I$ the set $\{y, x_1^{i},x_3^{i} \}$ induces a path of order $3$.  This is a contradiction,
and the claim is established. Moreover, this shows that for each $\sigma+1 \le i \le n$, there exists $y \in I_H$ such that $V(H_i)\cup \{y\}$ induces a complete graph of order $4$ in $H$. Hence, $|Y_H| \ge n - \sigma$.  By definition  every vertex in $Y_H$ is in $H''$.
Since each component of $H''$ is a complete graph of order $3$ or an isolated vertex, $|Y_H|=n-\sigma$.
We thus denote $Y_H$ as $\{y_1, \ldots,y_t\}$, where $t=n-\sigma$ and such that $V(H_{\sigma +i}) \subseteq N_H(y_i)$ for each $1 \le i \le t$.

Let $Q_2 = \left(\bigcup_{i=1}^{\sigma}\{x_1^i\}\right) \cup \left( \bigcup_{i=\sigma+1}^n\{x_2^i\}\right)$ and let $M_2 = I_H - N_H[Q_2]$.
The set $Q_2$ is independent and the set $T_2$  defined by $T_2 = M_2 \cup Q_2$ is a maximal independent set in $H$.  Let $H''' = H - T_2$.
Recall that  $G'$ has a component isomorphic to $K_3$ or $P_3$.
Using the same argument as above, we see that each component of $H'''$ is a complete graph of order $1$ or $3$, and at least one of these components is isomorphic to $K_3$.

We claim that $N_H(M_H) \cap \{x^1_1, \ldots,x^{\sigma}_1\}=\emptyset$.  Suppose this is not the case, and let $x \in M_H$ such that
$x$ has a neighbor, say $x_1^s$, where $s\in [\sigma]$.  Since $x$ is not isolated in $H''$, there exists a neighbor of $x$ in
$\cup_{i=\sigma+1}^n \{x^i_2,x^i_3\}$.  Without loss of generality we assume $xx^{\sigma+1}_2 \in E(H)$.  Since $x \notin Y_H$, there exists $k \in \{1,3\}$
such that $xx^{\sigma+1}_k \notin E(H)$. Let $I'$ be a maximal independent set of $H$ that contains $\{x_1^s,x^{\sigma+1}_k\}$.
Then $\{x,y_1\} \cup (V(G_{\sigma+1})-\{x^{\sigma+1}_k\})$ belongs to a component of order at least $4$ in $H-I'$, which is a contradiction.
Therefore, $N_H(M_H) \cap \{x^1_1, \ldots,x^{\sigma}_1\}=\emptyset$.

Suppose $Z_H \neq \emptyset$.  Let $z \in Z_H$.
Since $z$ is an isolate in $H''$, it follows that $z$ is not adjacent in $H$  to any vertex in $V(H_k)-\{x_1^k\}$, for $\sigma+1 \le k \le n$.
Also, $z$ is not adjacent in $H$  to $x_1^k$ for any $k$ with $\sigma+1 \le k \le n$ since $z$ is not adjacent to $x_3^k$, for otherwise
$z$ is contained in a component of $H'''$  that is not a complete graph of order $3$.
Thus, $N_H(z) \subseteq \{x_1^1, \ldots,x_1^{\sigma}\}$.

Since $H$ is connected, there is a path in $H$ from $z$ to $y_i$ for each $i\in[t]$.   Without loss of generality we may assume that a shortest such
path, say $P$, is a $z,y_1$-path.  Recall from above that $N_H(M_H) \cap \{x^1_1, \ldots,x^{\sigma}_1\}=\emptyset$.  Recall also that
$N_H(z) \subseteq \{x_1^1, \ldots,x_1^{\sigma}\}$.  From this we infer that
\[V(P) \subseteq \{y_1\} \cup Z_H \cup \{x_1^1, \ldots,x_1^{\sigma}\}\,.\]
 Reindexing $\{x_1^1,\ldots,x_1^{\sigma}\}$  if necessary, we may assume
without loss of generality that $y_1x_1^1$ and $z'x_1^1$ are both edges of $H$ for some $z'\in  Z_H$. Since $z'$ is not adjacent to all the
vertices of $H_{\sigma+1}$ we can expand $\{z', x_j^{\sigma+1}\}$ to a maximal independent set $I$ of $H$ for some $x_j^{\sigma+1} \notin N_H(z')$.
We now infer that $\{x_1^1, y_1\} \cup (V(H_{\sigma+1}) -\{x_j^{\sigma+1}\})$ induces a subgraph of order $4$ in $H-I$.  This is a contradiction,
and therefore $Z_H = \emptyset$.

We claim that if $M_H \neq \emptyset$, then each vertex in $M_H$ has exactly one neighbor in $H$ and that neighbor belongs to
$\cup_{i=\sigma+1}^n \{x_2^i,x_3^i\}$.  Suppose there exists $h \in M_H$.  Recall that $N_H(h) \cap \{x^1_1, \ldots,x^{\sigma}_1\}=\emptyset$.
Since $h$ is not isolated in $H''$ and $H$ is connected, we see that $h$ has a neighbor in
$\cup_{i=\sigma+1}^n \{x_2^i,x_3^i\}$.  We may assume without loss of generality that $hx_2^{\sigma+1} \in E(H)$.
If $h$ has another neighbor in $H_{\sigma+1}$, by choosing a maximal
independent set $I$ of $H$ containing  $x_2^{\sigma+1}$ it follows that $H-I$ has a component of order at least $4$ that contains
$\{y_1, h, x_1^{\sigma+1}, x_3^{\sigma+1}\}$.  This is a contradiction. Therefore, $h$ has at most one neighbor in $H_i$ for
each $i \in [n]-[\sigma]$.  Suppose next that $h$
has a neighbor $w \in V(H_j)$ for some $j$ with $\sigma+1 < j \le n$.  Let $I$ be a maximal independent set in $H$ such that $\{w, x_1^{\sigma+1}\} \subseteq I$.
Now the set $\{h,y_1,x_2^{\sigma+1}, x_3^{\sigma+1}\}$ induces a connected subgraph of $H-I$.  This is a contradiction.  Therefore, if $h \in M_H$, then
$\deg_H(h)=1$ and the only neighbor of $h$ is in the set $\cup_{i=\sigma+1}^n \{x_2^i,x_3^i\}$.

Suppose some vertex in $Y_H$, say $y_1$, is adjacent in $H$ to a vertex that does not belong to $V(H_{\sigma+1})$.
Let $L=\{u \in V(H')-V(H_{\sigma+1})\, :\, u \in N_H(y_1)\}$.  From what was shown earlier we know that $L \subseteq \cup_{i=1}^{\sigma}\{x_1^i\}$.
Suppose  there exists a vertex, say  $x_1^k \in L$, such that $\deg_H(x_1^k) \ge 2$.  Since no vertex of $M_H$ is adjacent to $x_1^k$ by the claim in
the paragraph above, let $y_j \in N_H(x_1^k)-\{y_1\}$.
Enlarge $\{y_j,x_1^{\sigma+1}\}$ to a maximal independent set $I$ of $H$.  Then $\{x_1^k,y_1,x_2^{\sigma+1},x_3^{\sigma+1}\}$ induces a connected subgraph in $H-I$,
which is again a contradiction, and it follows that every vertex in $L$ is a leaf in $H$.  Since $H$ is connected, this implies that $H\in \cF_2$,
and thus $G \,\Box\, H$ is not well-dominated by Lemma~\ref{lem:spwd}.  This is a contradiction.

{\bf Subcase 3.2}. [{\bf Every component of $H'$ is a complete graph of order $3$.}]
\vskip2mm

Note that from above we have each vertex $u\in I_H -C$ that is adjacent to every vertex of $V(H_i) - \{x_1^i\}$ for $i\in [n]$ is not adjacent to any vertex of $V(H_j) - \{x_1^j\}$ for $i\ne j$. Moreover, our arguments that $Y_H \ne \emptyset$ and $Z_H=\emptyset$ did not rely on $H'$ containing isolates. Therefore, we may assume that there exists $y_1 \in Y_H$ such that $N(y_1)= V(H_1)$. Furthermore, our argument that if $M_H \ne \emptyset$ implies $M_H$ has exactly one neighbor in $H$ that belongs to $\cup_{i=\sigma+1}^n \{x_2^i,x_3^i\}$ did not rely on $H'$ containing isolates. However, since $H$ is connected, this leads us to $H \in \cF_2$ or
$H=K_4$, both of which are contradictions, one by Lemma~\ref{lem:spwd} and one by our choice of $G \,\Box\, H$.

\vskip2mm
{\bf Case 4.} [{\bf Some component of $H'$ is $K_2$.}]
\vskip2mm
We may assume that each component of $H'$ is a complete graph of order $1$ or $2$ since the other possibilities have been argued in the first three cases.
Since $G'\,\Box\, H'$ is well-dominated, it follows from Theorem~\ref{thm:wdCart} that each component of $G'$ is $K_1$ or $K_2$. Recall that we may also assume that $G'$ has at least one component that is a complete graph of order $2$.

Assume that $G'$ contains an isolated vertex.
Let $G' = G_1 \cup \cdots \cup G_{\alpha} \cup G_{\alpha+1} \cup \cdots \cup G_{\ell}$ where $G_i = K_1$ for $i \in [\alpha]$ and $G_i = K_2$ for $\alpha+1 \le i \le \ell$.  For $\alpha+1 \le i \le \ell$, let $V(G_i)= \{x_1^i,x_2^i\}$  and for each $G_i = K_1$, label the vertex $x_1^i$. Let $J$ be the  maximal independent set of $G$ defined by
$J = C \cup\, \bigcup_{i=1}^{\ell} \{x_1^i\}$, where $C = I_G - N_G[\cup_{i=1}^{\ell} \{x_1^i\}]$.  Using the same argument as above, $G'' = G - J$ is the disjoint union of  components  each of which is either $K_1$ or $K_2$. It follows that each vertex $x \in I_G - C$ is either an isolate in $G''$ or is adjacent to a vertex  $x_2^i$ for some $\alpha+1 \le i \le \ell$. Note that if $x \in I_G-C$, then $x$ has at most one neighbor in the set $\cup_{i=\alpha+1}^{\ell}\{x_2^i\}$,
for otherwise $x$ belongs to a component of order at least $3$ in $G''$.

\vskip2mm

Suppose $G$ contains a triangle.  This means there exists a vertex $g \in I_G-C$ and an index $k\in [\ell]-[\alpha]$ such that $\{g,x_1^k,x_2^k\}$ induces a
triangle in $G$.  Without loss of generality we may assume $k=\alpha+1$.  Suppose $x_1^{\alpha+1}$ has a neighbor $w \in I_G - \{g\}$ that is not a leaf in $G$. If $w$ is adjacent to $x_2^{\alpha+1}$, then we can pick a maximal independent set $I$ of $G$ containing $x_2^{\alpha+1}$. However, $G-I$ contains the path induced by $\{w,g,x_1^{\alpha+1}\}$, which is a contradiction. If $w$ is adjacent to some $x_s^j$ for $j \ne \alpha+1$ and $s \in \{1, 2\}$, then we can pick a maximal independent set $I$ containing $x_s^j$ and $x_2^{\alpha+1}$ and again we have a contradiction. Therefore, $N_G(x_1^{\alpha+1}) - \{g, x_2^{\alpha+1}\}$ is either empty or consists of leaves in $G$. A similar argument can be used to show that $N_G(x_2^{\alpha+1}) - \{g, x_1^{\alpha+1}\}$  is either empty or consists of leaves in $G$.
Finally, suppose that $g$ is adjacent to some $x_r^j$ where $j \ne \alpha+1$, $r \in \{1, 2\}$ and $x_r^j$ is not a leaf in $G$. Since no vertex in $I_G-C$ has more than
one neighbor in $\cup_{i=\alpha+1}^{\ell}\{x_2^i\}$, we infer that $r=1$.  Also, $j \notin [\ell]-[\alpha]$ for otherwise $\{g, x_1^{\alpha+1}, x_1^j\}$ induces a
path in $G-I$ where $I$ is a maximal independent set of $G$ that contains $\{x_2^j, x_2^{\alpha+1}\}$.
 Since $x_1^j$ is not a leaf in $G$, we have $zx_1^j \in E(G)$ for some vertex $z \in I_G$.  It follows
that $z$ is not adjacent to $x_2^{\alpha+1}$ since $N_G(x_2^{\alpha+1}) - \{g, x_1^{\alpha+1}\}$ consists of leaves in $G$.  Therefore, we can choose a maximal independent set $I$ containing $\{z, x_2^{\alpha+1}\}$ and now $G- I$ contains the path of order $3$ induced by  $\{x_1^j, g, x_1^{\alpha+1}\}$, which is a contradiction.  Therefore,
$N_G(g)-\{x_1^{\alpha+1},x_2^{\alpha+1}\} $ is either empty or consists entirely of leaves in $G$.  Combining these conclusions about the neighbors of
$g, x_1^{\alpha+1}$ and $x_2^{\alpha+1}$  and the fact that $G$ is connected, we see that $G \in \cF_1$ or $G=K_3$. If $G \in \cF_1$, it follows from Lemma~\ref{lem:spwd} that
$G \,\Box\,  H$ is not well-dominated. On the other hand, $G \ne K_3$ as we have assumed $G$ has order at least $4$.

Therefore, $G$ does not contain a triangle. Furthermore, the same argument can be used to prove that $H$ does not contain a triangle. Hence, $G$ and $H$
are both of order at least $3$ and  girth at least $4$.  By  Lemma~\ref{lem:wcgirth4}, $G\,\Box\,  H$ is not well-covered and therefore not well-dominated.  This is a contradiction.
Hence we therefore assume that $G'$ does not have an isolated vertex.

Now, suppose a vertex $g \in I_G-C$ belongs to a triangle in $G$
induced by $\{g,x_1^k,x_2^k\}$ for some $k\in [\ell]$.  Using the argument in the paragraph above it follows that the two sets
$N_G(x_1^k) - \{g, x_2^k\}$ and $N_G(x_2^k) - \{g, x_1^k\}$  are either empty or consist of leaves in $G$.  On the other hand, $N_G(g)-\{x_1^k,x_2^k\}=\emptyset$ since
$G'$ does not contain an isolated vertex.  Once again it follows that $G \in \cF_1$ or $G=K_3$, both of which lead to a contradiction.  Therefore $G$ does not contain
a triangle and as above we infer by Lemma~\ref{lem:wcgirth4} that $G\,\Box\,  H$ is not well-covered and therefore not well-dominated. This is a contradiction.

We have shown that every possible case concerning the components of $H'$ leads to a contradiction. This finishes the proof.
\end{proof}

Using Theorem~\ref{thm:wdCart} together with Theorem~\ref{thm:CartesianWD} we now have a complete characterization of nontrivial, connected Cartesian products that
are well-dominated.
\begin{corollary}
A nontrivial, connected Cartesian product $G\,\Box\,H$ is well-dominated if and only if $G\,\Box\,H = P_3 \,\Box\,K_3$  or $G\,\Box\,H= K_n \,\Box\,K_n$ for some $n\ge 2$.
\end{corollary}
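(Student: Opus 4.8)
The plan is to argue by contradiction, taking a connected, well-dominated counterexample $G\,\Box\,H$ of minimum order in which neither factor is complete. The first moves are structural: since a well-dominated graph is well-covered, Theorem~\ref{thm:wc-cartesian} lets me assume $G$ is well-covered; Lemma~\ref{lem:path3} rules out $P_3$ as a factor, so both factors have order at least $4$. The engine of the proof is Observation~\ref{obs:wdreduction}: for any maximal independent sets $I_G\subseteq V(G)$ and $I_H\subseteq V(H)$, the product $(G-I_G)\,\Box\,(H-I_H)$ is well-dominated and strictly smaller, so by minimality and Theorem~\ref{thm:wdCart} every pair of nontrivial components $G_i,H_j$ satisfies $G_i\,\Box\,H_j\cong K_m\,\Box\,K_m$ or $P_3\,\Box\,K_3$. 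Thus each component of $G'=G-I_G$ and of $H'=H-I_H$ is $K_1$, $P_3$, or some $K_m$. A preliminary reduction, using Lemma~\ref{lem:bipartite}, shows I may choose $I_G$ and $I_H$ so that $G'$ and $H'$ each contain a nontrivial component (otherwise a factor would be complete bipartite, contradicting Lemma~\ref{lem:bipartite}).

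Next I would split into cases according to the isomorphism type of a nontrivial component of $H'$: it is $P_3$, or $K_m$ with $m\ge 4$, or $K_3$, or $K_2$. In each case Lemma~\ref{lem:path3} and Theorem~\ref{thm:wdCart} pin down the possible components of $G'$, and then the goal is to show $G$ itself must be a complete graph $K_{m+1}$ (or $K_3,K_4$), contradicting the choice of counterexample. The mechanism throughout is: pick a vertex $x_1^i$ from each nontrivial component of $G'$, extend a carefully chosen independent set to a maximal independent set $I$ of $G$, and examine $G-I$; because $G-I$ must again be a disjoint union of cliques (of the forbidden-free types), any "extra" edges among the $x_j^i$'s and the vertices of $I_G$ are severely constrained. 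Introducing the weak partition $\{Y_G,Z_G,M_G\}$ of $I_G$ — vertices that complete a clique $V(G_i)$ to $K_{m+1}$, vertices isolated in a second reduction $G''$, and everything else — lets me count: well-coveredness forces $|I_G|=|J|$ for the alternative maximal independent set $J$, which squeezes $|Y_G|$, $|Z_G|$, $|M_G|$ to the exact values needed. I expect to show $Z_G=\emptyset$ via a shortest-path argument between a would-be isolate and a $Y_G$-vertex, then $M_G=\emptyset$ by running the reduction along each coordinate $Q_j$ and observing $M_j=M_G$ while $M_j$ vertices touch no $G_i$, then concluding that $I_G$ is a single vertex dominating $G'$.

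The main obstacle, and the part requiring the most care, is Case 3 (a $K_3$ component of $H'$) and Case 4 (a $K_2$ component), where $G'$ may have $P_3$ or $K_2$ components rather than large cliques, so the clean "clique-completion" counting of the $K_m$, $m\ge 4$ case is unavailable. There the argument must instead identify that the stray neighbors of the triangle- or edge-vertices are all leaves, forcing $G$ into the families $\cF_1$ or $\cF_2$ (complete graph of order $3$ or $4$ with pendant leaves), at which point Lemma~\ref{lem:spwd} delivers the contradiction; the remaining possibilities $G=K_3$ or $G=K_4$ are excluded because $G$ has order at least $4$ or because $G\,\Box\,H$ was assumed not to have a complete factor. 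I would also need the subsidiary observation — provable by the same extend-and-inspect trick — that a vertex of $I_G$ adjacent to two vertices of a triangle of $G$ is adjacent to the third, which keeps the leaf/clique dichotomy clean. Finally, in Case 4, if neither $G$ nor $H$ contains a triangle after all this, both factors have girth at least $4$ and order at least $3$, so Lemma~\ref{lem:wcgirth4} contradicts well-coveredness directly, closing the last case.
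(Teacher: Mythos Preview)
Your proposal is a faithful sketch of the paper's proof of Theorem~\ref{thm:CartesianWD}: the minimal-counterexample setup, Observation~\ref{obs:wdreduction} to pass to $(G-I_G)\,\Box\,(H-I_H)$, the four-way case split on the components of $H'$, the weak partition $\{Y,Z,M\}$ with the shortest-path elimination of $Z$ and the $Q_j$-reduction elimination of $M$ in the $K_m$ case, and the appeal to the families $\cF_1,\cF_2$ via Lemma~\ref{lem:spwd} in the small-clique cases --- all of this matches the paper's argument essentially line for line (with the minor caveat that in Case~3 the paper carries out the $\{Y_H,Z_H,M_H\}$ analysis on $H$ rather than on $G$, landing in $\cF_2$ for $H$).

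However, the statement you are asked to prove is the \emph{Corollary}, not Theorem~\ref{thm:CartesianWD}. In the paper the Corollary is a one-line deduction: Theorem~\ref{thm:CartesianWD} forces one factor to be complete, and then Theorem~\ref{thm:wdCart} both finishes the ``only if'' direction (pinning down the other factor as $K_m$ or $P_3$) and supplies the entire ``if'' direction. Your write-up stops at ``one factor is complete'' and never makes that final invocation of Theorem~\ref{thm:wdCart}, so as a proof of the biconditional it is incomplete --- add that closing sentence and you are done.
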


\section*{Acknowledgements}
We would like to thank Erika King and Michael O'Grady for finding the error in our original proof to Theorem~\ref{thm:wdCartesian} in \cite{akr-2021}.
We also thank the referees for a number of helpful suggestions and for helping us to clarify some of the proofs in this paper.

\end{document}